\newcommand{\C}{\mathbb{C}}
\newcommand{\R}{\mathbb{R}}
\newcommand{\N}{\mathbb{N}}
\newcommand{\Hi}{\mathcal{H}}
\newcommand{\Gi}{\mathcal{G}}
\newcommand{\Vc}{\mathcal{V}}
\newcommand{\Lc}{\mathcal{L}}
\newcommand{\dom}{{\rm dom\,}}
\newcommand{\sgn}{{\rm sgn\,}}
\newcommand{\supp}{{\rm supp\,}}
\newcommand{\op}{{\rm op\,}}
\newcommand{\ran}{{\rm ran\,}}
\newcommand{\Span}{{\rm span\,}}
\newcommand{\mul}{{\rm mul\,}}
\newcommand{\loc}{{\rm loc\,}}
\newcommand{\Eplus}{\text{\footnotesize $\bigoplus\limits_{e\in E} $}\,}
\newcommand{\plus}{\text{\footnotesize $\bigoplus\limits_{n\in\N} $}\,}
\newcommand{\nplus}{\text{\footnotesize $\bigoplus\limits_{n=0}^{\infty} $}\,}
\theoremstyle{plain}
\newtheorem{Definition}{Definition}[section]
\newtheorem{Theorem}[Definition]{Theorem}
\newtheorem{Proposition}[Definition]{Proposition}
\newtheorem{Lemma}[Definition]{Lemma}
\newtheorem{Corollary}[Definition]{Corollary}
\newtheorem{Example}[Definition]{Example}
\title{Locally finite extensions and Gesztesy-\v{S}eba
realizations for the Dirac operator on a metric graph
 %\thanks{This
  %      work was supported by the Society for Industrial and
   %     Applied Mathematics, Philadelphia, Pennsylvania.}
        }
\author{Hannes Gernandt\thanks{Institute of Mathematics, TU Ilmenau, Weimarer Stra\ss e 25, 98693 Ilmenau, Germany ({\tt hannes.gernandt@tu-ilmenau.de}).}
        \and Carsten Trunk\thanks{Institut f\"{u}r Mathematik,  TU Ilmenau, Postfach 100565, D-98694, Ilmenau, Germany and Instituto Argentino de Mathem\'{a}tica "Alberto P. Calder\'{o}n" (CONICET), Saavedra 15, (1083) Buenos Aires, Argentina ({\tt carsten.trunk@tu-ilmenau.de}). }}
\begin{document}

\maketitle

\begin{abstract}
 We study extensions of direct sums of symmetric operators $S=\oplus_{n\in\N} S_n$.
  In general there is no natural boundary triplet for $S^*$ even if there is one for every $S_n^*$,
   $n\in\N$. We consider a subclass of extensions of $S$ which can be described in terms
    of the boundary triplets of $S_n^*$ and investigate the self-adjointness, the semi-boundedness
     from below and the discreteness of the spectrum. Sufficient conditions for these properties
      are obtained from recent results on weighted discrete Laplacians.
 The results are applied to Dirac operators on metric graphs with point interactions at the vertices.
  In particular, we allow graphs with arbitrarily small edge length.
\end{abstract}
%\textit{Keywords:}  quantum graphs,\ direct sum operators,\  dirac operators, boundary triplets\vspace{0.5cm}\\

%\textit{MSC 2010:} 15A22, 15A18, 47A55

%\begin{keywords}

%\end{keywords}

%\begin{AMS}

%\end{AMS}

%\pagestyle{myheadings}
%\thispagestyle{plain}
%\markboth{\textsc{H. Gernandt and C. Trunk}}{\textsc{Eigenvalue placement for regular matrix pencils}}

\section{Introduction}
We consider direct sum operators $S=\plus S_n$ in a direct sum Hilbert space $\Hi=\plus \Hi_n$ associated to a family of closed densely defined symmetric operators $\{S_n\}_{n\in\N}$, where  $S_n$ is defined in the Hilbert space $\Hi_n$. It is easy to see that  $S$ is closed and symmetric. Furthermore, if $S_n$ has self-adjoint extensions for all $n\in\N$, then also $S$ has self-adjoint extensions.

The direct sum operator $S$ can be viewed as an diagonal operator matrix with infinitely many entries. Its
self-adjoint extensions are no longer diagonal. Here we are interested in the spectrum and related properties. Setting
$\Hi_n=\{0\}$ for all but for two or three entries we end up with a $2 \times 2$ ($3\times 3$, respectively) operator matrix,
see the books \cite{Tretter2008} and \cite{Jeribi}.

For the description of the extensions of closed symmetric operators and their spectral properties we use boundary triplets and their associated Weyl functions, see \cite{DM91,DM95,GG91,K75}. A boundary triplet $\{\Gi,\Gamma_0,\Gamma_1\}$ consists of a Hilbert space $\Gi$ and a surjection $(\Gamma_0,\Gamma_1)^T:\dom S^*\rightarrow \Gi\times \Gi$ that satisfies an abstract Green identity, cf.\ \eqref{GI} below. Here the closed extensions of $S$ correspond one to one to the closed linear subspaces $\Theta\subseteq\Gi\times \Gi$ and the extension of $S$ is given by
\begin{align}
\label{Stheta}
S_{\Theta}:=\{f\in\dom S^* ~|~ (\Gamma_0 f,\Gamma_1 f)\in\Theta\}.
\end{align}
%The use of the Weyl-function, introduced in \cite{DM91,DM95}, allows furthermore to describe the spectral properties of the extensions of $S$.

In order to apply this approach to quantum graphs, we will write the extension \eqref{Stheta} of $S$ in a different, more suitable way: given a closed subspace $\Gi_{\op}$ of $\Gi$ and a closed operator $L$ with $\dom L\subseteq\Gi_{\op}$ then a specific closed extension of $S$ is given by
\begin{align}
\label{SPLint}
S_{L}=\{f\in\dom S^*~|~L\Gamma_0f=P_{\Gi_{\op}}\Gamma_1 f,\ \Gamma_0f\in\Gi_{\op}\}.
\end{align}

To illustrate the above abstract concept, we will briefly show how \eqref{SPLint} looks like for a $\delta$-type point interaction on a graph $G$ with countable sets of vertices $V$ and edges $E$ and with the edge length function $\ell:E\rightarrow(0,\infty)$. Consider  $\Hi=L^2(G)=\Eplus L^2(0,\ell(e))$ with the operator
\begin{align}
\label{Sbsp}
S=\Eplus S_{e},\quad  \dom S_{e}=W^{2,2}_0(0,\ell(e)),\quad S_{e}:=-\frac{d^2}{dx_e^2},
\end{align}
where $S_{e}$ is the minimal operator on the edge $e$ associated with the differential expression above and $W^{2,2}_0(0,\ell(e))$ denotes the usual second order Sobolev space with boundary values equal to zero. The operator $S$ in \eqref{Sbsp} is symmetric with the adjoint $S^*$ defined on $W^{2,2}(G):=\Eplus W^{2,2}(0,\ell(e))$.

A point interaction of $\delta$-type on a graph is an extension $H_{\alpha}$ of $S$. It is introduced for finite graphs in \cite{BK13,BEH08} %\cite[Section 1.4.4]{BK13} and \cite[Example 17.1.3]{BEH08}
and for infinite graphs in \cite{EKMN17}. The domain of $H_{\alpha}$ can be specified with a real-valued sequence $(\alpha(v))_{v\in V}$ by %\marginpar{G locally finite?}
\begin{align}
\label{PIintro}
\dom H_{\alpha}:=\Big\{(\psi_e)_{e\in E}\in W^{2,2}(G)\cap \mathcal{C}(G)~\Big|~\sum_{(e,t)\in I_v}\sgn(e,t)\psi_e'(t\ell(e))=\alpha(v)\psi(v),~v\in V \Big\},
\end{align}
where $\mathcal{C}(G)$ is the set of continuous functions on $G$ viewed as a metric space, $\psi(v)$ is the evaluation of $\psi$ at the vertex $v$ and $I_v$ is the set of pairs $(e,t)$ with $e\in E$, $t=0,1$. We have $(e,0)\in I_v$ if $v$ is an initial vertex of the directed edge $e$ and in this case we set $\sgn(e,0):=1$. Furthermore, we have $(e,1)\in I_v$ if $v$ is a terminal vertex of the directed edge $e$ and we set $\sgn(e,1):=-1$.

We show how \eqref{PIintro} can be written in the form \eqref{SPLint}.
First, we need a boundary triplet for $S^*$. It is well known
 \cite[Example 15.3]{S12} that a boundary triplet $\{\Gi_e,\Gamma_0^e,\Gamma_1^e\}$ for $S_e^*$ is given by
\begin{align}
\label{summandInt}
\Gi_e:=\C^2,\quad
\Gamma_0^{(e)}\psi_e:=\begin{pmatrix}\psi_e(0+)\\ \psi_e(\ell(e)-)\end{pmatrix},\quad  \Gamma_1^{(e)}\psi_e:=\begin{pmatrix}
\psi'_e(0+)\\-\psi'_e(\ell(e)-)
\end{pmatrix}.
\end{align}

If $0<\inf_{e\in E} \ell(e)<\sup_{e\in E} \ell(e)<\infty$, then it follows from \cite{KM10}
that a boundary triplet for $S^*$ is given by the direct sum of the triplets \eqref{summandInt},
\begin{align}
\label{natural}
\{\Gi,\Gamma_0,\Gamma_1\}:=\Big\{\Eplus\Gi_e, \Eplus \Gamma_0^{(e)} ,\Eplus \Gamma_1^{(e)}\Big\}.
\end{align}
Each entry of an element of $\Gi$ corresponds to a vertex of the decoupled graph, i.e.\ the elements of $\Gi$ are sequences $(x_{(e,t)})_{(e,t)\in I}$ with $I:=E\times\{0,1\}$. For $\psi\in W^{2,2}(G)$ we write
\begin{align*}
\begin{split}
%\label{gammaqg}
\Gamma_0\psi&:=(\Gamma_0^{(e,t)}\psi)_{(e,t)\in I}=(\psi_e(t\ell(e)))_{(e,t)\in I},\\ \Gamma_1\psi&:=(\Gamma_1^{(e,t)}\psi)_{(e,t)\in I}=(\sgn(e,t)\psi_e'(t\ell(e)))_{(e,t)\in I}.
\end{split}
\end{align*}
Using this boundary triplet, the condition $\psi\in C(G)$ in \eqref{PIintro} is equivalent to
\begin{align*}
%\label{normalesRT}
(\Gamma_0^{(e,t)}\psi_e)_{(e,t)\in I_v}\in\Gi_v:=\Span\{1_v\},\quad  1_v:=(1,\ldots,1)\in\C^{|I_v|},%:=\begin{cases}1, & \text{if $(e,t)\in E_v$,}\\ 0,& \text{if $(e,t)\notin E_v$,}\end{cases},
\end{align*}
for all $v\in V$.
Let $\deg v:=|I_v|$ be the \textit{degree} of $v\in V$. Here and in the following
we make the (crucial) assumption, that the graphs are \textit{locally finite}, i.e.
$$
\deg v < \infty \quad \mbox{for all } v \in V.
$$
The expressions in the equality in \eqref{PIintro} are equivalent to
\begin{align*}
%\label{herleitungLv}
P_{\Gi_v}(\Gamma_1^{(e,t)}\psi_e)_{(e,t)\in I_v}&=\frac{1}{\|1_v\|^2}((\Gamma_1^{(e,t)}\psi_e)_{(e,t)\in I_v},1_v)_{\C^{\deg v}}1_v\\
&=\frac{1}{\deg v}\sum_{(e,t)\in I_v}\sgn(e,t)\psi_e'(t\ell(e))\cdot 1_v\\
&=\frac{\alpha(v)}{\deg v}(\Gamma_0^{(e,t)}\psi_e)_{(e,t)\in I_v}
\end{align*}
Let $\iota_v$ be the natural embedding of elements of $\Gi_v$ in the sequence space $\Gi$.
For the operator
$$
L:=\oplus_{v\in V} L_v \quad \mbox{with } L_v\iota_v1_v:=\frac{\alpha(v)}{\deg v}\iota_v 1_v,\;  \dom L_v=\iota_v\Span\{1_v\}
$$
on $\Gi_{\mathcal{V}}:=\oplus_{v\in V}\iota_v \Gi_v$ we have
$$
S_L=H_{\alpha}
$$
in the case $0<\inf_{e\in E}\ell(e)<\sup_{e\in E}\ell(e)<\infty$.

In Proposition \ref{bekannt}, we show that the extension $S_L$ of $S$ is self-adjoint, semi-bounded from below and has discrete  spectrum if and only if the operator $L$ has this property. In our point interaction example the operator $L$ is just an infinite diagonal matrix, therefore the above mentioned spectral properties translate easily to $H_{\alpha}$, see \cite{EKMN17}.

If $\inf_{e\in E}\ell(e)=0$, then there is no natural candidate for a boundary triplet associated to $S^*$ since the operators in \eqref{natural} are in general not defined on $\dom S^*$. However, it was shown in \cite{KM10} that the triplet
 \eqref{natural} is a so called \textit{boundary relation} in the sense of \cite{DHMS06}.
To obtain a boundary triplet for $S^*$ from \eqref{natural} a regularization technique has been applied in \cite{CMP13,EKMN17,KM10,MN09,MN12}. %Using the regularized boundary triplet the operator $L$ in the for a point interaction is of the following form.
Here we apply in Theorem \ref{handyreg} below the technique from \cite{CMP13} for operators where there  exists $\lambda_0\in\R$ and $\varepsilon>0$ such that $(\lambda_0-\varepsilon,\lambda_0+\varepsilon)\in\bigcap_{n=0}^{\infty}\rho\big(S_n^*|_{\ker\Gamma_0^{(n)}}\big)$. Then a (regularized) boundary triplet  $\left\{\widetilde{\Gi},\nplus\widetilde{\Gamma}_0^{(n)},\nplus\widetilde{\Gamma}_1^{(n)}\right\}$ is given by
\begin{align}
\label{regInt}
\widetilde{\Gi}:=\Gi,\quad  \widetilde{\Gamma}_0^{(n)}:=\sqrt{\|M_n'(\lambda_0)\|}\Gamma_0^{(n)},\quad \widetilde{\Gamma}_1^{(n)}:=\frac{\Gamma_1^{(n)}-M_n(\lambda_0)\Gamma_0^{(n)}}{\sqrt{\|M_n'(\lambda_0)\|}},
\end{align}
where $M_n$ is the Weyl function of the boundary triplet $\{\Gi_n,\Gamma_0^{(n)},\Gamma_1^{(n)}\}$.
Again, one can represent extensions of $S$  in terms of an operator $\widetilde{L}$
(now with respect to the regularized triplet  $\{\Gi,\widetilde\Gamma_0,\widetilde\Gamma_1\}$ from \eqref{regInt})
in the form of \eqref{SPLint},
\begin{equation}
\label{newrep}
S_{\widetilde{L}}=\{f\in\dom S^*~|~\widetilde{L}\widetilde{\Gamma}_0f=
P_{\widetilde{\Gi}_{\op}}\widetilde{\Gamma}_1 f,\
\widetilde{\Gamma}_0f\in\widetilde{\Gi}_{\op}\},
\end{equation}
where $\widetilde{L}$ is defined on some subspace $\widetilde{\Gi}_{op}$ of $\Gi$.
Whereas in the example above the operator $L$ is just an (infinite) diagonal operator,
now, in general, the operator $\widetilde{L}$ has a more complex structure.

The operator $\widetilde{L}$ from above, that describes the extensions with respect to the regularized boundary mappings,
is studied in \cite{CMP13,EKMN17,KM10}.
In \cite{KM10} Schr\"{o}dinger operators with point interactions on the real line are
 considered. In this case, roughly speaking, the operator $L$ in \eqref{newrep} for
 a point interaction if \eqref{natural} is a boundary triplet,
is a diagonal operator, whereas the operator $\widetilde L$
is a Jacobi operator and therefore a correspondence of extensions describing such interactions and Jacobi operators is made
in \cite{KM10}. In particular, criteria for self-adjointness, semi-boundedness from below and discreteness of the spectrum
are obtained from corresponding criteria for Jacobi operators. Later, in \cite{CMP13} the ideas of \cite{KM10} were extended to the case of Dirac operators with point interactions on the real line, so called Gesztesy-\v{S}eba realzations, see \cite{GS87}.
 Recently, in \cite{EKMN17} the regularization is applied to quantum graphs and Laplacians with point interactions are studied.
 In this case, the operator $\widetilde{L}$ in \eqref{newrep} is a discrete Laplacians on a weighted $\ell^2$-space, see \cite{KL10,KL11,GHKLW13} and the references therein.

Here we consider a more general class of extensions of symmetric direct sum operators $S=\nplus S_n$:
 \textit{locally finite extensions} $S_L^{\loc}$. It turns out that the operator $\widetilde{L}$ from above is also a weighted discrete Laplacian. The locally finite extensions of $S$ are such that they extend the quantum graph examples to more general structures.
 In particular, the symmetric operators $S_n$ may have an arbitrary but finite  defect indices

We study properties of the extensions $S_{L}^{\loc}$ like self-adjointness, semi-boundedness from below and discreteness of the spectrum
in terms of the associated weighted discrete Laplacian $\widetilde{L}$
 to the extension $S_L^{\loc}$. We show that self-adjointness, semi-boundedness from below and discreteness of the spectrum
 of $\widetilde{L}$  implies the same property for $S_{L}^{\loc}$.
 Sufficient conditions for such properties for $S_{L}^{\loc}$ are obtained recently in \cite{GHKLW13,KL11}.

In the case where \eqref{natural} is not a boundary triplet, some recent approaches \cite{P14,SSVW15} without using the regularization technique, lead to a parametrization of the self-adjoint extensions of $S$,
 but without explicit criteria for the above mentioned properties (like
 (self-adjointness, semi-boundedness from and discreteness of the spectrum).

Moreover, the boundary triplet approach to quantum graphs was previously applied in numerous works, see e.g.\ \cite{BL10,EKMN17,KS99,KS06,P06,P08}.
In \cite{BL10,KS99,KS06} finite graphs are considered. Graphs with an infinite number
of edges but with finite vertex degree were considered in \cite{P06}, under the assumption that $\ell(e)=1$ for all $e\in E$, and assuming that $\inf_{e\in E}\ell(e)>0$ in \cite{P08}.
The study of the operators $S_L$ was carried out in \cite{BL10} for star-graphs and for quantum graphs satisfying $\inf_{e\in E}\ell(e)>0$
in \cite{LSV14}.

The paper is organized as follows: First, we recall linear relations in Hilbert space and boundary triplets. From the boundary triplet theory, we collect some results on the properties of the extension $S_L$ given by \eqref{SPLint} which can be described terms of the operator $L$ and the Weyl function of an underlying boundary triplet for  $S^*$.  In Section \ref{sec:LFE} we introduce locally finite extension $S_L^{\loc}$ and construct an associated discrete Laplacian $D_L$ such that roughly speaking $S_{L}^{\loc}=S_{D_L}$ holds in the sense of \eqref{newrep} with $\widetilde{L}=D_L$. From this relation, we obtain conditions for the self-adjointness, lower semi-boundedness and discreteness of the spectrum of $S_L^{\loc}$. These conditions only depend on the matrices $L_v$, the subspaces $\Gi_v$ and the decoupled Weyl functions $M_n$.
Finally, in Section \ref{sec:Dirac} we apply our results to Dirac operators with point interactions on infinite graphs.

\section{Linear relations in Hilbert spaces}
 Let $(\Hi,(\cdot,\cdot)_{\Hi})$ be a separable Hilbert space. A \textit{(closed) linear relation in} $\Hi$ is a (closed) subspace of $\Hi\times\Hi$ and the set of all closed linear relations in $\Hi$ is denoted by $\widetilde{\mathcal{C}}(\Hi)$.
 For a linear operator $T$ defined in $\Hi$ with values in $\Hi$, the graph of $T$  is a linear relation in $\Hi$. %We call $T$ \textit{closed} if its graph is closed and
 The set of all closed linear operators in $\Hi$ is denoted by $\mathcal{C}(\Hi)$. For the subspace of \textit{bounded} linear operators defined on $\Hi$ we write $\mathcal{L}(\Hi)$.

The \textit{domain}, the \textit{range}, the \textit{kernel}, the \textit{multivalued part} and the \textit{inverse} of a linear relation $\Theta$ in $\Hi$ are given by
\begin{align*}
    \dom\Theta&:=\{f\in\Hi ~|~ (f,f')\in\Theta~ \text{for some $f'\in\Hi$}\},\\
    \ran\Theta&:=\{f'\in\Hi ~|~ (f,f')\in\Theta~ \text{for some $f\in\Hi$}\},\\
    \ker\Theta&:=\{f\in\Hi ~|~ (f,0)\in\Theta\},\\
    \mul\Theta&:=\{f'\in\Hi ~|~ (0,f')\in\Theta\},\\
    \Theta^{-1}&:=\{(f',f)\in\Hi^2 ~|~(f,f')\in \Theta\}.
\end{align*}
Recall that the (operator-like) \textit{sum} of two linear relations $\Theta_1$ and $\Theta_2$ in $\Hi$ is given by
\[
\Theta_1+\Theta_2:=\{(f,f_1'+f_2')\in\Hi\times\Hi ~|~ (f,f_1')\in\Theta_1,\ (f,f_2')\in\Theta_2\}.
\]
Let $\Theta$ be a closed linear relation in $\Hi$. The set of all $\lambda\in\C$ such that  $(\Theta-\lambda)^{-1}$ is the graph of an operator from $\Lc(\Hi)$ is called  \textit{resolvent set} $\rho(\Theta)$ of $\Theta$. The complement of $\rho(\Theta)$ in $\C$ is the \textit{spectrum} $\sigma(\Theta)$ of $\Theta$.
The \textit{adjoint} $\Theta^*$  of a linear relation $\Theta$ in $\Hi$ is defined as
\[
\Theta^*:=\{(g,g')\in\Hi^2 ~|~ (f',g)_{\Hi}=(f,g')_{\Hi}\ \text{for all $(f,f')\in\Theta$}\}.
\]
A linear relation is called \textit{symmetric (self-adjoint)} if $\Theta\subseteq\Theta^*$ (resp.\ $\Theta=\Theta^*$).\\
For $\Theta\in\widetilde{\mathcal{C}}(\Hi)$ we have
\begin{align*}
%\label{muldom}
\mul\Theta=(\dom \Theta^*)^{\perp},\quad \mul\Theta^*=\dom\Theta^{\perp}.
\end{align*}

Given a self-adjoint linear relation $\Theta$, we can associate a self-adjoint operator
 on the Hilbert space $\overline{\dom\Theta}$, see \cite[Theorem 5.3]{A61}. Below, we present a
 somehow converse result.
\begin{Proposition}
\label{LTheta}
Let $\Hi_{\op}$ be a closed subspace of a Hilbert space $\Hi$ and consider a densely defined operator $L$ from $\Hi_{\op}$ to $\Hi_{\op}$. Define
\begin{align}
 \label{DSrelation}
 \Theta_{L}:=\{(f,Lf+g) ~|~ f\in\dom L, \ g\in\Hi_{\op}^{\perp} \}\subseteq{\Hi\times\Hi}.
 \end{align}
 Then the following holds.
\begin{itemize}
\item[\rm (a)] We have $\Theta_{L}^*=\Theta_{L^*}$. If $L$ is closable, we have $\overline{\Theta_{L}}=\Theta_{\overline{L}}$.

\item[\rm (b)] $\Theta_L$ is closed (symmetric, self-adjoint) if and only if $L$ is closed (resp.\ symmetric, self-adjoint).

\item[\rm (c)] If $L$ is symmetric then all extensions $\widetilde \Theta$ with $\Theta_L\subseteq\widetilde\Theta\subseteq\Theta_L^*$ are of the form $\Theta_{\widetilde L}$, where $\widetilde L$ is an extension of $L$.

\item[\rm (d)] If $L$ is self-adjoint, then $\rho(L)=\rho(\Theta_{L})$ and for all $\lambda\in\rho(L)$
\[
(\Theta_{L}-\lambda)^{-1}=\begin{pmatrix}(L-\lambda)^{-1} & 0 \\ 0 & 0 \end{pmatrix}\in\mathcal{L}(\Hi_{\op}\oplus\Hi_{\op}^\perp).
\]
\end{itemize}
\end{Proposition}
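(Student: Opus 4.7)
The plan is to handle the four items in turn, with (a) carrying most of the computation and (b)--(d) following from it by short arguments that exploit the orthogonal decomposition $\Hi=\Hi_{\op}\oplus\Hi_{\op}^{\perp}$. For (a) I would compute $\Theta_L^*$ directly from the definition: a pair $(g,g')$ lies in $\Theta_L^*$ precisely when $(Lf+h,g)_{\Hi}=(f,g')_{\Hi}$ for every $f\in\dom L$ and every $h\in\Hi_{\op}^{\perp}$. Choosing $f=0$ and letting $h$ range over $\Hi_{\op}^{\perp}$ forces $g\in\Hi_{\op}$. Choosing $h=0$ and using $f,Lf\in\Hi_{\op}$, the remaining condition reduces to $(Lf,g)_{\Hi_{\op}}=(f,P_{\Hi_{\op}}g')_{\Hi_{\op}}$ for all $f\in\dom L$, i.e.\ $g\in\dom L^*$ and $L^*g=P_{\Hi_{\op}}g'$. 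Writing $g'=L^*g+(g'-L^*g)$ with $g'-L^*g\in\Hi_{\op}^{\perp}$ identifies $\Theta_L^*=\Theta_{L^*}$, and the closure formula follows by applying this identity twice: $\overline{\Theta_L}=\Theta_L^{**}=\Theta_{L^*}^*=\Theta_{L^{**}}=\Theta_{\overline{L}}$.

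Part (b) follows from (a). If $L$ is closed, then $\Theta_L=\Theta_{\overline{L}}=\overline{\Theta_L}$ is closed; conversely, a convergent graph sequence $(f_n,Lf_n)\to(f,h)$ yields $(f,h)\in\Theta_L$, whence $h-Lf\in\Hi_{\op}\cap\Hi_{\op}^{\perp}=\{0\}$ and so $L$ is closed. For symmetry, the inclusion $\Theta_L\subseteq\Theta_{L^*}$ applied to pairs $(f,Lf)\in\Theta_L$ forces $Lf=L^*f$ by the same orthogonality argument, giving $L\subseteq L^*$; the converse is immediate from the definitions. Self-adjointness is analogous.

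For (c), given $\Theta_L\subseteq\widetilde{\Theta}\subseteq\Theta_{L^*}$, I define $\widetilde{L}$ by $\dom\widetilde{L}:=\{f : (f,h)\in\widetilde{\Theta}\text{ for some }h\in\Hi\}$ and $\widetilde{L}f:=P_{\Hi_{\op}}h$. This is single-valued because $\widetilde{\Theta}\subseteq\Theta_{L^*}$ forces $P_{\Hi_{\op}}h=L^*f$ for every admissible $h$, independently of the chosen representative. The inclusion $L\subseteq\widetilde{L}$ is immediate from $\Theta_L\subseteq\widetilde{\Theta}$. The inclusion $\widetilde{\Theta}\subseteq\Theta_{\widetilde{L}}$ holds by construction; for the reverse, one uses that $(0,k)\in\Theta_L\subseteq\widetilde{\Theta}$ for every $k\in\Hi_{\op}^{\perp}$, allowing the $\Hi_{\op}^{\perp}$-component of any representative of $\widetilde{L}f$ to be freely adjusted while remaining inside $\widetilde{\Theta}$.

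Finally, for (d) self-adjointness of $\Theta_L$ comes from (b). Given $\lambda\in\rho(L)$ and $h=h_1+h_2\in\Hi_{\op}\oplus\Hi_{\op}^{\perp}$, the equation $(f,h)\in\Theta_L-\lambda$ reads $(L-\lambda)f+g=h$ with $f\in\dom L$ and $g\in\Hi_{\op}^{\perp}$; projecting onto the two summands yields the unique solution $f=(L-\lambda)^{-1}h_1$ and $g=h_2$, producing the stated block form of $(\Theta_L-\lambda)^{-1}$ and the inclusion $\rho(L)\subseteq\rho(\Theta_L)$. The reverse inclusion is obtained by restricting the resolvent to inputs $h\in\Hi_{\op}$, which exhibits $L-\lambda$ as bijective. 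The main obstacle I anticipate is part (c): one has to verify simultaneously that $\widetilde{L}$ extracted from $\widetilde{\Theta}$ is a genuine (single-valued) operator and that every element of $\Theta_{\widetilde{L}}$ lies back in $\widetilde{\Theta}$. The key mechanism in both checks is that the multivalued part $\{0\}\times\Hi_{\op}^{\perp}$, already contained in $\Theta_L$ and hence in every intermediate $\widetilde{\Theta}$, absorbs all ambiguity in the choice of representatives.
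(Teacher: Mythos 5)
Your proposal is correct and follows essentially the same route as the paper: the identity $\Theta_L^*=\Theta_{L^*}$ is established by the same direct computation (testing against $f=0$ and against $h=0$ to split off the $\Hi_{\op}^\perp$-component), and parts (b)--(d) are then derived from it in the same way, with your explicit use of the multivalued part $\{0\}\times\Hi_{\op}^{\perp}$ in (c) merely making precise a step the paper calls obvious. The only deviation is that you obtain $\overline{\Theta_L}=\Theta_{\overline{L}}$ via the double-adjoint identity $\overline{\Theta_L}=\Theta_L^{**}=\Theta_{L^{**}}$ (legitimate, since closability of $L$ makes $L^*$ densely defined in $\Hi_{\op}$ so that (a) can be applied to $L^*$), whereas the paper argues directly with approximating sequences; both arguments are valid.
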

\begin{proof}
Let $(f,g)\in\Theta_L^*$. Then for all $(f',Lf'+g')\in\Theta_L$
with $f'\in\dom L$ and $g'\in\Hi_{\op}^\perp$ we have
\begin{align}
\label{mvpweg}
(g,f')=(f,Lf'+g').
\end{align}
Choosing $f'=0$ we obtain $f\in \Hi_{\op}$. Therefore, we conclude from \eqref{mvpweg}
$$
(g,f')=(f,Lf')
$$
for all $f'\in \dom L$. This implies that $f\in\dom L^*$ and $P_{\Hi_{\op}}g= L^*f$. Hence,
$$
(f,g)= (f,P_{\Hi_{\op}}g + P_{\Hi_{\op}^\perp}g) =
(f,L^*f + P_{\Hi_{\op}^\perp}g)\in\Theta_{L^*}.
$$
Assume conversely that $(f,L^*f+g)\in\Theta_{L^*}$ with $f\in\dom L^*$ and some $g\in\Hi_{\op}^\perp$.
Then we have for all $(f',Lf'+g')\in\Theta_L$ with $f'\in\dom L$ and $g'\in\Hi_{\op}^\perp$
$$
(f',L^*f+g) = (f',L^*f)= (Lf',f)=(Lf'+g',f)
$$
and therefore $(f,L^*f+g)\in\Theta_L^*$.
Thus we have seen that $\Theta_L^*=\Theta_{L^*}$.

Let $(f,\overline L f+g)\in\Theta_{\overline{L}}$ with $f\in\dom \overline L$ then there is a sequence $((f_n,Lf_n))_{n\in\N}$
 which converges in $\Hi^2$ to $(f,\overline L f)$. But then $(f_n,Lf_n+g)\in\Theta_L$ converges in $\Hi^2$ to $(f,\overline{L}f+g)$,
 as $n\rightarrow\infty$, which implies  $(f,\overline{L}f+g)\in\overline{\Theta_L}$.

Conversely, let $(f,g')\in\overline{\Theta_L}$ then there exists a sequence $(f_n,Lf_n+g_n)\in\Theta_L$ with $g_n\in\Hi_{\op}^\perp$ and $f_n\in\dom L$ which converges to $(f,g')$. As $\Hi^2= \left(\Hi_{\op}\times \Hi_{\op}\right)\oplus
\left(\Hi_{\op}^\perp \times \Hi_{\op}^\perp\right)$ we have $(f_n,Lf_n)\rightarrow (f,P_{\Hi_{\op}}g')$ and
$(0,g_n) \to (0,P_{\Hi_{\op}^\perp}g')$, as $n\rightarrow\infty$. Therefore $f\in\dom\overline{L}$
with  $\overline L f= P_{\Hi_{\op}}g'$. Hence
$$
(f,g')=(f, \overline L f+P_{\Hi_{\op}^\perp}g') \in \Theta_{\overline{L}}.
$$

The assertion (b) is a consequence of (a). We show (c).
Let $\widetilde\Theta$ be an extension of $\Theta_L$ with
$\Theta_L\subseteq\widetilde\Theta\subseteq\Theta_{L^*}$. Obviously,
$$
\dom L =\dom\Theta_L\subseteq \dom\widetilde\Theta\subseteq\dom\Theta_{L^*}=\dom L^*.
$$
Set $\widetilde L:=L^*|\dom\widetilde \Theta$. Then $\widetilde L$ is an extension of $L$. As $\widetilde \Theta\subset
 \Theta_{L^*}$, every element $(f,g') \in \widetilde \Theta$ satisfies $f\in \dom \widetilde \Theta\subset
 \dom L^*$ and has a representation
 $$
 (f,g') = (f, L^*f+g)
 $$
 for some $g \in \Hi_{\op}^\perp$. As $L^*f = \widetilde Lf$ for $f\in \dom\widetilde\Theta$,
 $(f,g') \in \Theta_{\widetilde L}$ follows. Hence, $\widetilde \Theta \subset  \Theta_{\widetilde L}$.
 The converse inclusion is obvious and (c) is shown.

 For the last statement observe that we have for all $\lambda \in \mathbb C$
 $$
 (\Theta_L-\lambda)^{-1} = \left\{
 ((L-\lambda)f+g,f) ~|~ f\in\dom L, \ g\in\Hi_{\op}^{\perp}
 \right\}.
 $$
From this (d) follows easily.
\end{proof}

\section{Extension theory of symmetric operators with boundary triplets}

We review the boundary triplet theory following \cite{DM91}, see also \cite{K75}.

\begin{Definition}
For a densely defined symmetric operator $A\in\mathcal{C}(\Hi)$ in a Hilbert space $\Hi$ we say that $\{\Gi,\Gamma_0,\Gamma_1\}$ is a {\rm boundary triplet} for $A^*$ if $(\Gi,(\cdot,\cdot)_{\Gi})$ is a Hilbert space, $(\Gamma_0,\Gamma_1)^\top:\dom A^*\rightarrow \Gi^2$ is surjective and the following abstract Green identity holds
\begin{align}
\label{GI}
(A^*f,g)_{\Hi}-(f,A^*g)_{\Hi}=(\Gamma_1f,\Gamma_0g)_{\Gi}-(\Gamma_0f,\Gamma_1 g)_{\Gi}.
\end{align}
\end{Definition}
Boundary triplets are a standard tool to describe all closed extensions of a given symmetric operator. For a densely defined symmetric operator $A\in\mathcal{C}(\Hi)$, we fix a boundary triplet $\{\Gi,\Gamma_0,\Gamma_1\}$ for $A^*$. The extension $A_{\Theta}$ of $A$ corresponding to a parameter $\Theta\in\widetilde{\mathcal{C}}(\Gi)$ is defined as
\begin{align*}
%\label{Atheta}
\dom A_{\Theta}:=\{f\in\dom A^* ~|~ (\Gamma_0f,\Gamma_1f)\in\Theta \},\quad A_{\Theta}f:=A^*f.
\end{align*}
The correspondence between the closed linear relations
$\Theta\in\widetilde{\mathcal{C}}(\Gi)$ and the closed extensions $A_{\Theta}$ of $A$ is bijective (see, e.g., \cite{DM91}).
The following two special self-adjoint extensions of $A$ will play a prominent role:
$$
A_0:=A_{\{0\}\times \Gi}=A^*|_{\ker\Gamma_0}\quad \mbox{and}\quad A_1:=A_{\Gi\times\{0\}}=A^*|_{\ker\Gamma_1}
$$
In \cite{DM91} a correspondence of properties between $\Theta\in\widetilde{\mathcal{C}}(\Gi)$ and $A_{\Theta}\in\mathcal{C}(\Gi)$ was established using the concept of the \textit{$\gamma$-field} and the \textit{Weyl function}.
\[
\gamma:\rho(A_0)\rightarrow\mathcal{L}(\Hi,\Gi),~ \gamma(\lambda):=(\Gamma_0|_{\mathcal{N}_{\lambda}})^{-1},\quad \mathcal{N}_{\lambda}(A):=\{f\in\dom A^* ~|~ A^*f=\lambda f\},
\]
\[
M:\rho(A_0)\rightarrow\mathcal{L}(\Gi),\quad M(\lambda):=\Gamma_1\gamma(\lambda).
\]
%\textcolor{blue}{ %and the set of closed extensions of $A$
%\[
%\Ext_A:=\{\widetilde{A}\in\mathcal{C}(\Hi) ~|~ A\subseteq \widetilde{A}\subseteq A^*\}.
%\]

Here we prefer the following description of the extensions.
Let $L$ be a densely defined operator on a subspace $\Gi_{\op}$ of $\Gi$ mapping into $\Gi_{\op}$. We consider the relation $\Theta_{L}$ from \eqref{DSrelation} and the associated extension $A_{L} :=A_{\Theta_{L}}$ and therefore
\begin{align}
\label{Apl}
\dom A_{L}=\{f\in \dom A^* ~|~\Gamma_0f\in\dom L,~ L\Gamma_0f=P_{\overline{\dom L}}\Gamma_1f\},
\end{align}
where $P_{\overline{\dom L}}$ is the orthogonal projection onto $\Gi_{\op}=\overline{\dom L}$
since $L$ is assumed to be densely defined in $\Gi_{\op}$.
Proposition \ref{LTheta} and some well known results on the relationship between $\Theta\in\widetilde{\mathcal{C}}(\Gi)$ and $A_{\Theta}\in\mathcal{C}(\Hi)$
from \cite{DM91, KM10} lead to the next statement. Here we use the notation $\mathfrak{S}_p(\Hi)$ with  $p\in(0,\infty]$ for the two sided \textit{Schatten-von Neumann ideal} and we denote by $n_{\pm}(A):=\dim\mathcal{N}_{\pm i}(A)$ the \textit{defect numbers} of a symmetric densely defined linear operator $A$.
\begin{Proposition}
\label{bekannt}
Let $A$ be a densely defined symmetric operator in $\Hi$ with boundary triplet $\{\Gi,\Gamma_0,\Gamma_1\}$ for $A^*$ and let $L$ be a densely defined operator in a subspace $\Gi_{\op}$ of $\Gi$ then the following holds.
\begin{itemize}
\item[\rm (a)] $A_{L}$ is self-adjoint (symmetric) if and only if $L$ is self-adjoint (resp.\ symmetric).

\item[\rm (b)] $A_{\overline{L}}=\overline{A_{L}}$, $A_{L^*}=A_{L}^*$ and $n_{\pm}(A_{L})=n_{\pm}(L)$.

\item[\rm (c)] If $L$ is symmetric, then there is a bijective correspondence between the extensions of $L$ and the extensions of $A_L$.

\item[\rm (d)] For $\lambda\in\rho(A_0)$ we have $\lambda\in\rho(A_{L})$ if and only if  $0\in\rho(\Theta_{L}-M(\lambda))$. In this case the Krein resolvent formula holds
	\begin{align*}
(A_{L}-\lambda)^{-1}-(A_{0}-\lambda)^{-1}=\gamma(\lambda)(\Theta_{L}-M(\lambda))^{-1}\gamma(\overline{\lambda})^*.
\end{align*}
\item[\rm (e)] Let $(A_0-\lambda_0)^{-1}\in\mathfrak{S}_{p}(\Hi)$ for some  $\lambda_0\in\rho(A_0)$ and $p\in[1,\infty]$. Then  $(A_{L}-\lambda)^{-1}\in\mathfrak{S}_p(\Hi)$ if and only if  $(L-\lambda)^{-1}\in\mathfrak{S}_p(\Gi)$ for  $\lambda\in\rho(L)$.
\end{itemize}
\end{Proposition}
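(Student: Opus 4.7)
The strategy is to combine two dictionaries: the classical correspondence, due to \cite{DM91, KM10}, between closed extensions $A_\Theta$ of $A$ and closed linear relations $\Theta\in\widetilde{\mathcal{C}}(\Gi)$ via the boundary triplet, with the bijection between relations of the form $\Theta_L$ and their generating operators $L$ supplied by Proposition \ref{LTheta}. Since $A_L=A_{\Theta_L}$ by definition, every claim about $A_L$ first translates into a claim about $\Theta_L$ through boundary-triplet theory and is then translated into a claim about $L$ via Proposition \ref{LTheta}.

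For (a), I would recall the standard fact that $A_\Theta$ is self-adjoint (symmetric) if and only if $\Theta$ is, and apply Proposition \ref{LTheta}(b) with $\Theta=\Theta_L$. For (b), the identities $A_{\overline L}=\overline{A_L}$ and $A_{L^*}=A_L^*$ reduce via the standard $\overline{A_\Theta}=A_{\overline\Theta}$ and $A_\Theta^*=A_{\Theta^*}$ to Proposition \ref{LTheta}(a). For the defect-index equality $n_\pm(A_L)=n_\pm(L)$, I would combine the standard $n_\pm(A_\Theta)=n_\pm(\Theta)$ with the elementary observation that $\ker(\Theta_{L^*}\mp i)=\ker(L^*\mp i)$: any element of $\Theta_{L^*}\mp i$ with second component $0$ is of the form $(f,L^*f+g\mp i f)$ with $g\in\Gi_{\op}^\perp$, and splitting $0$ into its $\Gi_{\op}$- and $\Gi_{\op}^\perp$-components forces $g=0$ and $L^*f=\pm i f$.

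Part (c) is obtained by composing two bijections: the boundary-triplet bijection between closed extensions of $A_L$ and closed relations $\widetilde\Theta$ with $\Theta_L\subseteq\widetilde\Theta\subseteq\Theta_L^*$, and the bijection from Proposition \ref{LTheta}(c) identifying these relations with $\Theta_{\widetilde L}$ for closed extensions $\widetilde L$ of $L$. Part (d) is nothing but the classical Krein resolvent formula applied to $\Theta=\Theta_L$; no new work is required beyond quoting it.

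The main obstacle is part (e). I plan to first propagate $(A_0-\lambda_0)^{-1}\in\mathfrak{S}_p(\Hi)$ to every $\lambda\in\rho(A_0)$ via the Hilbert resolvent identity; combined with the Krein formula from (d) and the boundedness of $\gamma(\lambda)$ and $\gamma(\overline\lambda)^*$, this reduces the question to whether $(\Theta_L-M(\lambda))^{-1}\in\mathfrak{S}_p(\Gi)$. The delicate remaining step is to exchange the operator-valued shift $M(\lambda)$ for the scalar shift $\lambda$: from the resolvent identity $(\Theta_L-M(\lambda))^{-1}-(\Theta_L-\lambda)^{-1}=(\Theta_L-M(\lambda))^{-1}(M(\lambda)-\lambda)(\Theta_L-\lambda)^{-1}$ together with the boundedness of $M(\lambda)-\lambda$, Schatten-class membership transfers between $(\Theta_L-M(\lambda))^{-1}$ and $(\Theta_L-\lambda)^{-1}$. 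Finally, Proposition \ref{LTheta}(d) identifies $(\Theta_L-\lambda)^{-1}$ with the block $\mathrm{diag}((L-\lambda)^{-1},0)$ on $\Gi_{\op}\oplus\Gi_{\op}^\perp$, so its $\mathfrak{S}_p$-membership on $\Gi$ is equivalent to that of $(L-\lambda)^{-1}$ on $\Gi_{\op}$, closing the chain of equivalences.
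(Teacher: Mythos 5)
Your proposal is correct and follows exactly the route the paper intends: the paper gives no explicit proof of Proposition \ref{bekannt}, stating only that it follows from Proposition \ref{LTheta} combined with the well-known $\Theta\leftrightarrow A_\Theta$ correspondences from \cite{DM91,KM10}, which is precisely the two-dictionary argument you carry out. The extra details you supply (the kernel computation $\ker(\Theta_{L^*}\mp i)=\ker(L^*\mp i)$ for the defect indices, and the resolvent-identity transfer between $(\Theta_L-M(\lambda))^{-1}$ and $(\Theta_L-\lambda)^{-1}$ together with Proposition \ref{LTheta}(d) for part (e)) are sound and consistent with the cited sources.
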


Let $A$ be a densely defined symmetric operator which is semi-bounded from below, i.e.\ $A\geq\gamma$ for some $\gamma\in\R$. Then there is a distinguished, in some sense maximal, semi-bounded self-adjoint extension $A_F\geq \gamma$, which is called the \textit{Friedrichs extension} of $A$, see e.g.\ \cite[Section 10.4]{S12}.

Given boundary triplet $\{\Gi,\Gamma_0,\Gamma_1\}$ of $A^*$ with Weyl function $M$ such that $A_0$ equals the Friedrichs extension $A_F$, then we use the notation
$M(\lambda)\rightrightarrows-\infty$ for $\lambda\rightarrow-\infty$ to indicate that for any $\gamma>0$
 there exists $\lambda_{\gamma}$ with $-M(\lambda_{\gamma})\geq \gamma$.

We collect some results on nonnegative extensions from \cite{DM91,DM95}, see also \cite{S12}.
\begin{Proposition}
\label{negEWundso}
Given a densely defined symmetric operator $A\in\mathcal{C}(\Hi)$, a boundary triplet $\{\Gi,\Gamma_0,\Gamma_1\}$ for $A^*$ with $A_0=A_F\geq\gamma$ for $\gamma>0$ and a self-adjoint operator $L\in\widetilde{\mathcal{C}}(\Gi_{\op})$ on a subspace $\Gi_{\op}$ of $\Gi$. Then the following holds.
\begin{itemize}

\item[\rm (a)] $L-P_{\Gi_{\op}}M(\lambda_0)|_{\Gi_{\op}}\geq0$ for $\lambda_0<\gamma$ implies $A_{L}\geq\lambda_0$.

\item[\rm (b)] If $M(\lambda)\rightrightarrows-\infty$ for $\lambda\rightarrow-\infty$ then  $A_{L}$ is semi-bounded from below if and only if $L$ is semi-bounded from below.

%\item[\rm (d)] For $\lambda\in\rho(A_0)$ we have  $\ker(A_{L}-\lambda)=\gamma(\lambda)\ker(L-P_{\Gi_{op}}M(\lambda)|_{\Gi_{op}})$. This implies $N_-(A_{L})=N_+(P_{\Gi_{op}}M(0)|_{\Gi_{op}}-L)$ and $\dim\ker A_{L}=\dim\ker(L-P_{\Gi_{op}}M(0)|_{\Gi_{op}})$.
\end{itemize}
\end{Proposition}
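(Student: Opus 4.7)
The plan is to reduce both (a) and (b) to the corresponding results for arbitrary self-adjoint linear relations $\Theta\in\widetilde{\mathcal{C}}(\Gi)$ proved in \cite{DM91,DM95}. Those results state, under the hypothesis $A_0=A_F\geq\gamma$, that (i) $\Theta-M(\lambda_0)\geq 0$ implies $A_\Theta\geq\lambda_0$ whenever $\lambda_0<\gamma$, and, if in addition $M(\lambda)\rightrightarrows-\infty$ as $\lambda\to-\infty$, (ii) $A_\Theta$ is semi-bounded from below if and only if $\Theta$ is. I would apply these with the specific choice $\Theta=\Theta_L$ from \eqref{DSrelation}: by Proposition \ref{LTheta}(b) the self-adjointness of $L$ yields self-adjointness of $\Theta_L$, and by construction $A_L=A_{\Theta_L}$.

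The essential link between the hypotheses for $L$ and for $\Theta_L$ comes from a short quadratic form computation. Every element of $\Theta_L-M(\lambda_0)$ has the form $(f,Lf+g-M(\lambda_0)f)$ with $f\in\dom L\subseteq\Gi_{\op}$ and $g\in\Gi_{\op}^\perp$, so
\[
(Lf+g-M(\lambda_0)f,f)_\Gi=(Lf,f)-(M(\lambda_0)f,f)=(Lf,f)-(P_{\Gi_{\op}}M(\lambda_0)|_{\Gi_{\op}}f,f),
\]
since $(g,f)_\Gi=0$ and $f\in\Gi_{\op}$. Consequently $\Theta_L-M(\lambda_0)\geq 0$ in the relational sense is equivalent to the operator inequality $L-P_{\Gi_{\op}}M(\lambda_0)|_{\Gi_{\op}}\geq 0$ on $\Gi_{\op}$, and (a) is immediate from (i).

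For (b), the same computation without the shift by $M(\lambda_0)$ shows that the lower bound of $\Theta_L$ equals that of $L$, so $\Theta_L$ is semi-bounded from below if and only if $L$ is. Combined with (ii) this gives the claim. Alternatively, one implication follows directly from part (a): if $L\geq c$, use $M(\lambda)\rightrightarrows-\infty$ to pick $\lambda_0<\min\{\gamma,0\}$ with $-M(\lambda_0)\geq -c$ on $\Gi$, which forces $P_{\Gi_{\op}}M(\lambda_0)|_{\Gi_{\op}}\leq c$ on $\Gi_{\op}$ and hence $L-P_{\Gi_{\op}}M(\lambda_0)|_{\Gi_{\op}}\geq 0$, so $A_L\geq\lambda_0$ by (a).

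The main point to pin down is the convention of semi-boundedness for a self-adjoint linear relation with the nontrivial multi-valued part $\mul\Theta_L=\Gi_{\op}^\perp$: the standard convention $(f',f)\geq c\|f\|^2$ for all $(f,f')\in\Theta_L$ automatically discards the $g\in\Gi_{\op}^\perp$ contribution because $f\in\dom\Theta_L\subseteq\Gi_{\op}$. Verifying that the Derkach--Malamud results invoked above use this same convention is the only nontrivial bookkeeping; once this is done, the proof reduces to the two-line form computation displayed above.
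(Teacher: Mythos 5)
Your proposal is correct and follows essentially the same route as the paper, which states Proposition \ref{negEWundso} without proof as a specialization of the Derkach--Malamud results for self-adjoint relations $\Theta$ to the relation $\Theta_L$ from \eqref{DSrelation}. The quadratic-form computation you supply --- showing that the $\Gi_{\op}^\perp$-component drops out so that $\Theta_L-M(\lambda_0)\geq 0$ is equivalent to $L-P_{\Gi_{\op}}M(\lambda_0)|_{\Gi_{\op}}\geq 0$ and that the lower bounds of $\Theta_L$ and $L$ coincide --- is exactly the bookkeeping the paper leaves implicit, and your remark about the semi-boundedness convention for relations with nontrivial multivalued part is the right point to check.
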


In the lemma below, we decribe the change of a boundary triplet $\{\Gi,\Gamma_0,\Gamma_1\}$ under unitary transformations of the space $\Gi$.

\begin{Lemma}
\label{unitary}
Let $A\in\mathcal{C}(\Hi)$ be a densely defined symmetric operator with a boundary triplet $\{\Gi,\Gamma_0,\Gamma_1\}$ for $A^*$ and a unitary operator $U:\Gi\rightarrow\widehat{\Gi}$ then $\{\widehat{\Gi}, U\Gamma_0, U\Gamma_1\}$ is a boundary triplet for $A^*$ with Weyl function $\lambda\mapsto U M(\lambda)U^*$ on  $\rho(A^*|_{\ker\Gamma_0})$. Furthermore the extension $A_{L}$ given by \eqref{Apl}  can be written with $\widehat L:=ULU^*$ as
\[
\dom A_{L}=\{f\in\dom A^*~|~U\Gamma_0f\in \dom \widehat L,\quad \widehat LU\Gamma_0f=P_{\overline{\dom \widehat L}}U\Gamma_1f\}.%=A_{ULU^*}.
\]
\end{Lemma}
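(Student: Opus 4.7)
The plan is to verify the three assertions of the lemma in the order they are stated, exploiting throughout the identities $U^*U = I_{\Gi}$, $UU^* = I_{\widehat{\Gi}}$, and $(Ux,Uy)_{\widehat{\Gi}} = (x,y)_{\Gi}$. None of the steps should be deep; the work is essentially conjugating each object by $U$ and keeping careful track of domains and projections.

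First I would check that $\{\widehat{\Gi}, U\Gamma_0, U\Gamma_1\}$ is a boundary triplet for $A^*$. Surjectivity of $(U\Gamma_0, U\Gamma_1)^\top:\dom A^*\to\widehat{\Gi}\times\widehat{\Gi}$ is immediate from the surjectivity of $(\Gamma_0,\Gamma_1)^\top$ composed with the bijection $\mathrm{diag}(U,U)$. For the abstract Green identity \eqref{GI}, I would write the right-hand side of the candidate triplet and use the unitarity of $U$ to reduce it to the Green identity of the original triplet:
\begin{align*}
(U\Gamma_1 f, U\Gamma_0 g)_{\widehat{\Gi}} - (U\Gamma_0 f, U\Gamma_1 g)_{\widehat{\Gi}}
= (\Gamma_1 f,\Gamma_0 g)_{\Gi} - (\Gamma_0 f,\Gamma_1 g)_{\Gi}
= (A^*f,g)_{\Hi} - (f,A^*g)_{\Hi}.
\end{align*}

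Next I would compute the new Weyl function. Since $\ker(U\Gamma_0) = \ker\Gamma_0$, the distinguished extension $A^*|_{\ker(U\Gamma_0)}$ coincides with $A_0$, so $\rho(A^*|_{\ker U\Gamma_0}) = \rho(A_0)$. The $\gamma$-field of the new triplet is $\widehat{\gamma}(\lambda) = (U\Gamma_0|_{\mathcal{N}_\lambda})^{-1} = \gamma(\lambda) U^*$, hence
\begin{align*}
\widehat{M}(\lambda) = U\Gamma_1 \widehat{\gamma}(\lambda) = U\Gamma_1\gamma(\lambda)U^* = UM(\lambda)U^*,\qquad \lambda\in\rho(A_0).
\end{align*}

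For the last assertion, I would translate the defining conditions of $\dom A_L$ through $U$. Setting $\widehat{L} := ULU^*$ one has $\dom\widehat{L} = U\dom L$, so $\overline{\dom\widehat{L}} = U\overline{\dom L}$, and a direct decomposition argument gives the projection identity $P_{\overline{\dom\widehat L}} = U P_{\overline{\dom L}}U^*$. Thus $\Gamma_0 f\in\dom L$ is equivalent to $U\Gamma_0 f\in\dom\widehat L$, and applying $U$ to the equation $L\Gamma_0 f = P_{\overline{\dom L}}\Gamma_1 f$ and inserting $U^*U = I_{\Gi}$ yields
\begin{align*}
\widehat L\, U\Gamma_0 f = ULU^*\, U\Gamma_0 f = U P_{\overline{\dom L}}\Gamma_1 f = P_{\overline{\dom\widehat L}}\, U\Gamma_1 f,
\end{align*}
with each step reversible since $U$ is invertible. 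This gives the claimed description of $\dom A_L$.

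The only mild obstacle I anticipate is the projection identity $P_{\overline{\dom\widehat L}} = UP_{\overline{\dom L}}U^*$; this follows from the observation that $U$ maps $\overline{\dom L}$ isometrically onto $\overline{\dom\widehat L}$ and its orthogonal complement onto $(\overline{\dom\widehat L})^{\perp}$, so that the unique orthogonal decomposition in $\widehat{\Gi}$ is obtained by pushing the one in $\Gi$ through $U$.
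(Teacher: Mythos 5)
Your proposal is correct and follows essentially the same route as the paper's proof: verify surjectivity and the Green identity via unitarity, note $\ker(U\Gamma_0)=\ker\Gamma_0$ to identify the Weyl function as $UM(\lambda)U^*$, and then conjugate the defining conditions of $\dom A_L$ by $U$ using $\dom\widehat L=U\dom L$ and $P_{\overline{\dom\widehat L}}=UP_{\overline{\dom L}}U^*$. The only (harmless) difference is that you make the $\gamma$-field computation explicit where the paper simply asserts the form of the transformed Weyl function.
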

\begin{proof}
Since $U$ is unitary, the mapping $f\mapsto (U\Gamma_0f, U\Gamma_1f)$ from $\dom A^*$ into $\widehat{\Gi}^2$ is onto and the abstract Green identity \eqref{GI} holds. Hence $\{\widehat{\Gi},U\Gamma_0,U\Gamma_1\}$ is a boundary triplet for $A^*$ with $A^*|_{\ker\Gamma_0}=A^*|_{\ker U\Gamma_0}$ and Weyl function $\lambda\mapsto UM(\lambda)U^*$ which is defined for all $\lambda\in\rho(A^*|_{\ker\Gamma_0})$.
Given that $f\in\dom A_{L}$ then we have $\Gamma_0f\in\dom L$ and $L\Gamma_0f=P_{\overline{\dom L}}\Gamma_1f$ which is equivalent to
\begin{align}
\label{eins}
U \Gamma_0f\in U\dom L,\quad  ULU^* U\Gamma_0f=UP_{\overline{\dom L}}U^*U\Gamma_1f.
\end{align}
Furthermore, it is easy to see that
\begin{align}
\label{zwei}
\dom \widehat{L}=\dom ULU^*=\dom LU^*=U\dom L.
\end{align}
Moreover $UP_{\overline{\dom L}}U^*$ is an orthogonal projection, satisfying
\begin{align}
\label{drei}
UP_{\overline{\dom L}}U^*=P_{U\overline{\dom L}}=P_{\overline{U\dom L}}=P_{\overline{\dom\widehat L}}.
\end{align}
Rewriting \eqref{eins} with \eqref{zwei} and \eqref{drei} completes the proof of the lemma.
\end{proof}

\section{Locally finite extensions of direct sums of symmetric operators}
\label{sec:LFE}
In this section, we introduce direct sum operators and their locally finite extensions.
Throughout this section we consider a family of Hilbert spaces $\{\Hi_n\}_{n\in\N}$
with inner product $(\cdot,\cdot)_{\Hi_n}$ and densely defined symmetric operators $S_n\in\mathcal{C}(\Hi_n)$ with boundary triplets  $\{\Gi_n,\Gamma_0^{(n)},\Gamma_1^{(n)}\}$ for $S_n^*$ such that $\dim\Gi_n<\infty$, $n\in\N$.
We introduce the direct sum Hilbert space $\Hi$,
\[
\Hi:=
\nplus\Hi_n:=\{x=(x_n)_{n\in\N} : x_n\in\Hi_n, (x,x)_{\Hi}<\infty \}
\]
with inner product
\[
 ((x_n)_{n\in\N},(y_n)_{n\in\N})_{\Hi}:=\sum_{n=0}^{\infty}(x_n,y_n)_{\Hi_n}.
\]
Acting on $\Hi$ we introduce the direct sum operator $S:=\nplus S_n$ via
\begin{align*}%\label{defS}
\dom S:=\bigg\{(f_n)_{n\in\N} ~\Big|~ f_n\in\dom S_n,\ \sum_{n=0}^{\infty}\|S_nf_n\|_{\Hi_n}^2<\infty\bigg\},\quad S(f_n)_{n\in\N}:=(S_nf_n)_{n\in\N}.
\end{align*}
The case of a finite dimensional direct sum
Hilbert space $\Hi$ is obtained by setting $\Hi_n:=\{0\}$ and $S_n:=0$ for all $n\geq N$ and some $N\in\N$.
It is easy to see that $S$ is densely defined, closed with the adjoint
\[
\left(\nplus S_n\right)^*=\nplus S_n^*.
\]
Since $S_n\subseteq S_n^*$ for all $n\in\N$ it is easy to see that $S$ is symmetric with $n_{\pm}(S)=\sum_{n=0}^{\infty}n_{\pm}(S_n)$. To describe the extensions of $S$, the natural candidate for a boundary triplet
for $S^*$ is given by $\Gi:=\nplus\Gi_n$ with the boundary mappings $\Gamma_i$, $i=1,2$,
\begin{align*}
\dom\Gamma_i:=\left\{(f_n)_{n\in\N} : f_n\in\dom\Gamma_i^{(n)},\ \sum_{n=0}^\infty \|\Gamma_i^{(n)}f_n\|^2<\infty\right\},\quad  \Gamma_i(f_n)_{n\in\N}:=\left(\Gamma_i^{(n)}f_n\right)_{n\in\N}
\end{align*}
which can also be written in the form
\begin{align}
\label{dstriplet}
\Gi:=\nplus\Gi_n,\quad \Gamma_0:=\nplus\Gamma_0^{(n)},\quad \Gamma_1:=\nplus\Gamma_1^{(n)}.
\end{align}
In general, the operators $\Gamma_0$ and $\Gamma_1$ are only defined on a subspace of $\dom S^*$ such that \eqref{dstriplet} is not a boundary triplet for $S^*$. However, it was shown in \cite{KM10} that the triplet  $\{\Gi,\Gamma_0,\Gamma_1\}$ given by \eqref{dstriplet}
forms a single valued boundary relation in the sense of \cite{DHMS06}.

We use a particular regularization from \cite{CMP13} for the direct sum triplet \eqref{dstriplet} for operators with a common real point in the resolvent set, i.e. we assume that for $S_{n0}:=S_n^*|_{\ker\Gamma_0^{(n)}}$ there exist $\lambda_0\in\R$ and $\varepsilon>0$ such that $(\lambda_0-\varepsilon,\lambda_0+\varepsilon)\subseteq\bigcap_{n\in\N}\rho(S_{n0})$. In the theorem below we use \cite[Theorem 2.12]{CMP13}, to provide a boundary triplet for the direct sum operator $S^*$.
\begin{Theorem}
\label{handyreg}
Let $\{S_n\}_{n\in\N}$ be a family of densely defined symmetric linear operators $S_n\in\mathcal{C}(\Hi_n)$ with boundary triplets $\{\Gi_n,\Gamma_0^{(n)},\Gamma_1^{(n)}\}$ for $S_n^*$ and Weyl functions $M_n$
and $(\lambda_0-\varepsilon,\lambda_0+\varepsilon)\subseteq\bigcap_{n=0}^{\infty}\rho(S_{n0})$ for some $\varepsilon>0$ and $\lambda_0\in\R$.
Then $\left\{\nplus\Gi_n,\nplus\widetilde{\Gamma}_0^{(n)},\nplus\widetilde{\Gamma}_1^{(n)}\right\}$ with
\begin{align}
\label{gutestripel}
\widetilde{\Gamma}_0^{(n)}:=\sqrt{\|M_n'(\lambda_0)\|}\Gamma_0^{(n)},\quad \widetilde{\Gamma}_1^{(n)}:=\sqrt{\|M_n'(\lambda_0)\|}^{-1}\left(\Gamma_1^{(n)}-M_n(\lambda_0)\Gamma_0^{(n)}\right)
\end{align}
is a boundary triplet for $S^*=\nplus S_n^*$. The Weyl function $\widetilde{M}$ of this triplet is given by $\widetilde{M}:\rho(S^*|_{\ker\Gamma_0})\rightarrow\mathcal{L}(\Gi)$, $\lambda\mapsto\nplus\widetilde{M}_n(\lambda)$ with
\begin{align}
\label{tildeM}
\widetilde{M}_n:=\frac{1}{\|M_n'(\lambda_0)\|}(M_n-M_n(\lambda_0)).
\end{align}
\end{Theorem}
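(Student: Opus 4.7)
The plan is to reduce the theorem to two logically independent steps: (i) verify that for each fixed $n\in\N$ the regularized triple $\{\Gi_n,\widetilde{\Gamma}_0^{(n)},\widetilde{\Gamma}_1^{(n)}\}$ is still a boundary triplet for $S_n^*$ and compute its Weyl function; (ii) invoke \cite[Theorem 2.12]{CMP13} to glue the individual regularized triples into a boundary triplet for the direct sum $S^*=\nplus S_n^*$. For step (i), observe that the pair $(\widetilde{\Gamma}_0^{(n)},\widetilde{\Gamma}_1^{(n)})$ is obtained from $(\Gamma_0^{(n)},\Gamma_1^{(n)})$ by an invertible block-triangular transformation whose entries are the positive scalar $\sqrt{\|M_n'(\lambda_0)\|}$ and the operator $M_n(\lambda_0)$, which is self-adjoint because $\lambda_0\in\R\cap\rho(S_{n0})$. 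This transformation is surjective onto $\Gi_n\times\Gi_n$, and when substituted into the abstract Green identity \eqref{GI} the two cross-terms containing $M_n(\lambda_0)$ cancel by self-adjointness, so that \eqref{GI} is preserved.

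Next I would compute the $\gamma$-field of the regularized triplet. Since $\widetilde{\Gamma}_0^{(n)}=\sqrt{\|M_n'(\lambda_0)\|}\Gamma_0^{(n)}$, inverting on $\mathcal{N}_\lambda(S_n)$ yields $\widetilde{\gamma}_n(\lambda)=\gamma_n(\lambda)/\sqrt{\|M_n'(\lambda_0)\|}$, and hence
\[
\widetilde{M}_n(\lambda)=\widetilde{\Gamma}_1^{(n)}\widetilde{\gamma}_n(\lambda)=\frac{1}{\|M_n'(\lambda_0)\|}\bigl(M_n(\lambda)-M_n(\lambda_0)\bigr),
\]
which is exactly \eqref{tildeM}. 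Differentiating at $\lambda_0$ gives $\|\widetilde{M}_n'(\lambda_0)\|=1$ for every $n\in\N$, so the sequence of derivatives is automatically uniformly bounded in $n$. This normalization is the whole point of the regularization.

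For step (ii), note that $\ker\widetilde{\Gamma}_0^{(n)}=\ker\Gamma_0^{(n)}$, so the \emph{unperturbed} operator $S_{n0}$ (and therefore the common spectral gap $(\lambda_0-\varepsilon,\lambda_0+\varepsilon)$) is unchanged by passage to the regularized triplet. Together with $\widetilde{M}_n(\lambda_0)=0$ and the uniform bound $\sup_n\|\widetilde{M}_n'(\lambda_0)\|\le 1$, these are exactly the hypotheses of \cite[Theorem 2.12]{CMP13}, which then yields that the direct sum \eqref{gutestripel} is a boundary triplet for $S^*$. The statement about $\widetilde{M}$ follows because $\mathcal{N}_\lambda(S)=\nplus\mathcal{N}_\lambda(S_n)$ and both $\widetilde{\Gamma}_0$ and $\widetilde{\Gamma}_1$ act componentwise, so the global $\gamma$-field and Weyl function decompose as orthogonal direct sums of the summand objects.

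The main obstacle hides inside step (ii) and hence inside the cited theorem of \cite{CMP13}: one must show that the componentwise regularized mappings genuinely extend to all of $\dom S^*$, not merely to the subspace on which $\nplus\Gamma_i^{(n)}$ is $\ell^2$-summable. The estimate $\|\widetilde{M}_n'(\lambda_0)\|=1$ is what allows the resolvent identity at $\lambda_0$ to convert the potentially divergent sums $\sum_n\|\Gamma_i^{(n)}f_n\|^2$ into convergent ones, which is the content one cites from \cite{CMP13}. Once this input is accepted, all remaining verifications are routine linear algebra in each summand.
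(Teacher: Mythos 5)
Your proposal is correct and follows essentially the same route as the paper, which offers no independent proof of this theorem but simply imports it from \cite[Theorem 2.12]{CMP13}; your additional per-summand verifications (the block-triangular transformation preserves the Green identity since $M_n(\lambda_0)=M_n(\lambda_0)^*$ for real $\lambda_0\in\rho(S_{n0})$, the computation of $\widetilde{M}_n$, and the normalization $\|\widetilde{M}_n'(\lambda_0)\|=1$) are exactly the routine steps implicit in that citation. You also correctly locate the only non-trivial content, namely that the regularized componentwise mappings extend to all of $\dom S^*$, inside the cited result.
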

The construction of this regularization implies that  $S_n^*|_{\ker\Gamma_0^{(n)}}=S_n^*|_{\ker\widetilde{\Gamma}_0^{(n)}}$ and therefore
\begin{align}
\label{invariant}
S^*|_{\ker\widetilde{\Gamma}_0}=\nplus S_{n0}.
\end{align}
The remainder of this section is devoted to locally finite extensions. We assume that the Hilbert space $\Gi$ is given as the direct sum Hilbert space
$$
\Gi=\nplus\Gi_{n}=\nplus\C^{d_n} \quad \mbox{with } d_n<\infty \mbox{ and }  \Gi_{n}=\C^{d_n}.
$$
The elements of $\Gi$ are sequences of the form $x=(x_i)_{i\in I}$ where
\[I:=\{(n,d)~|~n\in\N,\ d=1,\ldots,d_n\}.\]

In the following we will consider a partition of $I$ into subsets $I_v$ where $v$ is an element of a countable index set $V$ such that the following conditions are fulfilled:
\begin{itemize}
\item[\rm (i)] $|I_v|<\infty$,

\item[\rm (ii)] $I_v\cap I_w=\emptyset$ for all $v,w\in V$ with $v\neq w$,

\item[(iii)] $\bigcup_{v\in V} I_v=I$.
\end{itemize}
Since $\Gamma_i^{(n)}f_n\in\C^{d_n}$ the sequence $(\Gamma_i^{(n)}f_n)_{n\in\N}$, $i=0,1$ is an element of $\times_{n=0}^{\infty}\C^{d_n}$, but not necessarily of the Hilbert space $\nplus\C^{d_n}$. Thus, we can view it as sequence $(\Gamma_i^{(n,d)}f_n)_{(n,d)\in I}$ where
\[
\Gamma_i^{(n,d)}f_n:=(\Gamma_i^{(n)}f_n)_d,\quad 1\leq d\leq d_n,\quad i=0,1
\]
is the $d$-th entry of $\Gamma_i^{(n)}f_n$. With this we introduce for $f\in \dom S^*$
\[
\Gamma_i^vf:=(\Gamma_i^{(n,t)}f_n)_{(n,t)\in I_v}, \quad i=0,1.
\]
Before we continue with the definition of locally finite extensions, we illustrate the definitions from above with the quantum graph example from the introduction.
\begin{Example}\label{SinPijama}
Consider the densely defined symmetric operators $S_1,\ldots,S_N$
with domains $\dom S_n:=W^{2,2}_0(0,\ell(e_n))$, $n=1,\ldots,N$  with $S_n\psi_n:=-\psi_n''$.
Then a boundary triplet for $S_n^*$ with $n=1,\ldots,N$ is given by
\[
\{\C^2,(\psi_n(0+),\psi_n(\ell(e_n)-))^\top,(\psi_n'(0+),-\psi_n'(\ell(e_n)-))^\top\},
\]
Hence $d_n=2$ for all $n$ and therefore
\[
I=\{1,\ldots,N\}\times\{1,2\}.
\]
Consider the index set $V=\{v_1,\ldots,v_{N+1}\}$. We introduce $I_{v_i}:=\{(i,1)\}$ for $i=1,\ldots,N$
and  $I_{v_{N+1}}:=\{(i,2) : i=1,\ldots,N\}$. It is easy to see that the conditions (i)-(iii) from above are satisfied. For each index $i=1,\ldots,N+1$ there is an edge associated with it and the sets $I_{v_i}$ describe which edges are glued together at the vertex $v_i$ which leads to a graph. In this simple example all vertices $v_i$, $i=1,\ldots,N$ corresponds so singleton sets $I_{v_i}$, i.e.,
only one vertex leads to $v_i$, whereas  in $v_{N+1}$ we have $N$ vertices. Hence the underlying
graph is a star graph with $N+1$ vertices and $N$ edges. Furthermore, we have
\begin{align*}
\Gamma_0^{v_n}(\psi_j)_{j=1}^{N}&=\psi_n(0+),\ \Gamma_1^{v_n}(\psi_j)_{j=1}^{N}=\psi_n'(0+), \quad  n=1,\ldots,N,\\[1ex]
\Gamma_0^{v_{N+1}}(\psi_j)_{j=1}^{N}&=(\psi_1(\ell(e_1)-),\psi_2(\ell(e_2)-),\ldots,\psi_N(\ell(e_N)-))^\top,\\[1ex]
\Gamma_1^{v_{N+1}}(\psi_j)_{j=1}^{N}&=(-\psi_1'(\ell(e_1)-),-\psi_2'(\ell(e_2)-),\ldots,-\psi_N'(\ell(e_N)-))^\top.
\end{align*}
Obviously, one easily can construct examples with infinitely many vertices and edges. Observe, as we only consider
locally finite extensions, that always  $|I_v|<\infty$ holds, which means, in the cases of graph-like constructions,
that in each edge there are only finitely many vertices.
\end{Example}
\begin{Example}
Here we give an example for a star graph with finite edges and vertices but with infinite edge length.
Consider the densely defined symmetric operators $S_1,\ldots,S_N$ from Example \ref{SinPijama}
and, in addition, $\dom S_{N+1}:=W^{2,2}_0(0,\infty)$ with $S_{N+1}\psi_{N+1}:=-\psi_{N+1}''$.
Then a boundary triplet for $S_n^*$ with $n=1,\ldots,N$ is given as in  Example \ref{SinPijama}
and a triplet for $S_{N+1}^*$ is given by  $\{\C,\psi_{N+1}(0+),\psi_{N+1}'(0+)\}$, see e.g.\ \cite[Example 15.5]{S12}.
Hence $d_n=2$ for all $n=1,\ldots,N$ but $d_{N+1}=1$ and therefore
\[
I=(\{1,\ldots,N\}\times\{1,2\})\cup\{(N+1,1)\}.
\]
Consider the index set $V=\{v_1,\ldots,v_{N+1}\}$. We introduce $I_{v_i}:=\{(i,1)\}$ for $i=1,\ldots,N$, $I_{v_{N+1}}:=\{(i,2) : i=1,\ldots,N\}\cup\{(N+1,1)\}$. As above we have a star graph, but with one vertex less, as the edge corresponding
to $N+1$ is a semi-axis,
\begin{align*}
\Gamma_0^{v_n}(\psi_j)_{j=1}^{N+1}&=\psi_n(0+),\ \Gamma_1^{v_n}(\psi_j)_{j=1}^{N+1}=\psi_n'(0+), \quad  n=1,\ldots,N,\\[1ex]
\Gamma_0^{v_{N+1}}(\psi_j)_{j=1}^{N+1}&=(\psi_1(\ell(e_1)-),\psi_2(\ell(e_2)-),\ldots,\psi_N(\ell(e_N)-), \psi_{N+1}(0+))^\top,\\[1ex]
\Gamma_1^{v_{N+1}}(\psi_j)_{j=1}^{N+1}&=(-\psi_1'(\ell(e_1)-),-\psi_2'(\ell(e_2)-),\ldots
,-\psi_N'(\ell(e_N)-),\psi_{N+1}'(0+))^\top.
\end{align*}
Similarly, one can construct graphs with infinitely many vertices and edges. Moreover, we stress that we are able
to allow $d_n>2$ with leads to structures which do no longer allow an interpretation as a graph.
\end{Example}
Now let $\Gi_v$ be a subspace of $\C^{|I_v|}$ and consider the Hermitian matrix $L_v:\Gi_v\rightarrow\Gi_v$.
We introduce the locally finite extension $S_{L}^{\loc}$ of $S$
\begin{align*}
\begin{split}
\dom S_{L}^{\loc}&:=\left\{f\in\dom S^*~|~L_v\Gamma_0^vf=P_{\Gi_v}\Gamma_1^vf,\ \Gamma_0^v f\in\Gi_v,\ v\in V\right\},\\
 S_{L}^{\loc}f&:=S^*f.
\end{split}
\end{align*}
It is shown in Proposition \ref{adjoint} below that $S_{L}^{\loc}$ is the adjoint of the operator $S_{L}^{\min}\subseteq S^*$ with
\[
\dom S_{L}^{\min}:=\left\{f\in \dom S_{L}^{\loc} ~|~ \supp (\Gamma_0^vf)_{v\in V}, \supp (P_{\Gi_v}\Gamma_1^vf)_{v\in V}\ \text{finite}\right\}
\]
where we used the \textit{support} of a sequence $x=(x_i)_{i\in I}\in\C^I$ given by
\[
\supp x:=\{i\in I~|~ x_{i}\neq0\}.
\]

For its proof we need a variant of the abstract Green identity \eqref{GI}.
\begin{Lemma}
\label{hilfslemma}
Let $f,g\in\dom S^*$ then
\begin{align}
\label{truncGI}
(S^*f,g)-(f,S^*g)=\sum_{v\in V}(\Gamma_1^vf,\Gamma_0^vg)-(\Gamma_0^vf,\Gamma_1^vg).
\end{align}
 Furthermore, given $v\in V$,  $y_0\in\Gi_v$ and $y_1\in \Gi_v^{\perp}$ there exists $g=(g_n)_{n\in\N}\in \dom S_L^{\min}$ with
  finite support such that the following equations hold
\begin{align}
\label{glsystem}
\begin{split}
&\Gamma_0^vg=y_0,\quad \Gamma_1^vg=y_1+L_vy_0,\\  &\Gamma_0^wg=\Gamma_1^wg=0,
\quad\text{for all $w\in V\setminus\{v\}$.}
\end{split}
\end{align}
\end{Lemma}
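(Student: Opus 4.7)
For the first identity my plan is to deduce it by re-summing the abstract Green identity for the regularized boundary triplet $\{\Gi,\Gamma_0,\Gamma_1\}$ provided by Theorem \ref{handyreg}. Writing $\Gi = \nplus \C^{d_n}$, any inner product in $\Gi$ expands as an absolutely convergent double sum
$$(x,y)_\Gi = \sum_{(n,d)\in I} x_{n,d}\,\overline{y_{n,d}},$$
and the partition $I = \bigcup_{v\in V} I_v$ lets me regroup this as $\sum_{v\in V}(x|_{I_v},y|_{I_v})_{\C^{|I_v|}}$. Applying this regrouping to both inner products on the right of \eqref{GI} and recognising the restrictions as the maps $\Gamma_i^v$ immediately yields \eqref{truncGI}.

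For the existence claim my plan is a direct component-wise construction exploiting $|I_v|<\infty$. Define
$$N_v := \{n \in \N : (n,d) \in I_v \text{ for some } d\},$$
which is finite by (i). For $n \notin N_v$ I set $g_n := 0$. For each $n \in N_v$ the boundary triplet $\{\Gi_n,\Gamma_0^{(n)},\Gamma_1^{(n)}\}$ supplies the surjection $(\Gamma_0^{(n)},\Gamma_1^{(n)})^\top : \dom S_n^* \to \C^{d_n}\times\C^{d_n}$, so I pick $g_n \in \dom S_n^*$ whose components of $\Gamma_0^{(n)}g_n$ and $\Gamma_1^{(n)}g_n$ at indices $(n,d) \in I_v$ agree with the corresponding entries of $y_0$ and $y_1 + L_v y_0$, and whose components at the remaining indices $(n,d) \in I\setminus I_v$ all vanish. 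Assembling the $g_n$ gives a finitely supported $g \in \dom S^*$ realising \eqref{glsystem} verbatim.

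It remains to verify $g \in \dom S_L^{\min}$. For the compatibility condition $L_w \Gamma_0^w g = P_{\Gi_w}\Gamma_1^w g$ defining $\dom S_L^{\loc}$, at $w = v$ I compute $P_{\Gi_v}\Gamma_1^v g = P_{\Gi_v}(y_1 + L_v y_0) = L_v y_0$ using the orthogonality $y_1 \perp \Gi_v$ together with $L_v y_0 \in \Gi_v$, which matches $L_v\Gamma_0^v g = L_v y_0$; at $w\neq v$ both sides are $0$. The sequences $(\Gamma_0^w g)_{w\in V}$ and $(P_{\Gi_w}\Gamma_1^w g)_{w\in V}$ are supported in $\{v\}$, so the finite-support condition in the definition of $S_L^{\min}$ is trivially met. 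The argument is essentially a bookkeeping exercise with no substantial analytic obstacle; the main point to get right will be the simultaneous prescription of both boundary components $\Gamma_0^{(n)} g_n$ and $\Gamma_1^{(n)} g_n$ at the finitely many relevant $(n,d) \in I$, which is licensed precisely by the full $\C^{d_n}\times\C^{d_n}$-surjectivity of each single-component boundary mapping.
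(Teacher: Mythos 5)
Your construction for the second assertion is essentially the paper's: set $g_n=0$ whenever no index $(n,d)$ lies in $I_v$, and for the finitely many remaining $n$ use the surjectivity of $(\Gamma_0^{(n)},\Gamma_1^{(n)})^\top:\dom S_n^*\rightarrow\Gi_n\times\Gi_n$ to prescribe both boundary vectors at once; your verification that the resulting $g$ lies in $\dom S_L^{\min}$, via $P_{\Gi_v}(y_1+L_vy_0)=L_vy_0$ and the vanishing at all $w\neq v$, is correct and in fact more explicit than what the paper writes.

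The first part, however, has a gap. You propose to start from the abstract Green identity of the \emph{regularized} triplet of Theorem \ref{handyreg} and regroup the resulting $\Gi$-inner products over the partition $\{I_v\}_{v\in V}$. But the restrictions to $I_v$ that this regrouping produces are the per-vertex maps built from $\widetilde{\Gamma}_0^{(n)}=\sqrt{\|M_n'(\lambda_0)\|}\,\Gamma_0^{(n)}$ and $\widetilde{\Gamma}_1^{(n)}=\sqrt{\|M_n'(\lambda_0)\|}^{-1}(\Gamma_1^{(n)}-M_n(\lambda_0)\Gamma_0^{(n)})$, not the maps $\Gamma_i^v$ appearing in \eqref{truncGI}, which are defined from the \emph{unregularized} $\Gamma_i^{(n)}$. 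To pass from one to the other you need the identity $(\widetilde{\Gamma}_1^{(n)}f_n,\widetilde{\Gamma}_0^{(n)}g_n)-(\widetilde{\Gamma}_0^{(n)}f_n,\widetilde{\Gamma}_1^{(n)}g_n)=(\Gamma_1^{(n)}f_n,\Gamma_0^{(n)}g_n)-(\Gamma_0^{(n)}f_n,\Gamma_1^{(n)}g_n)$, which rests on $M_n(\lambda_0)=M_n(\lambda_0)^*$ for real $\lambda_0$; moreover this cancellation happens blockwise in $n$, whereas a set $I_v$ may cut across several blocks $\C^{d_n}$, so the regrouping must be organized around that. If instead you meant the unregularized direct-sum mappings, then $(\Gamma_1f,\Gamma_0g)_{\Gi}$ is not an inner product of two elements of $\Gi$ — in general $\Gamma_if\notin\Gi$ for $f\in\dom S^*$, which is precisely why \eqref{dstriplet} fails to be a boundary triplet — so the ``absolutely convergent double sum'' you invoke is not available. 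The paper sidesteps all of this, and also avoids assuming the hypotheses of Theorem \ref{handyreg} (which the lemma does not impose), by applying the Green identity \eqref{GI} componentwise to each $S_n^*$, securing absolute convergence of $\sum_n(S_n^*f_n,g_n)$ via Cauchy--Schwarz, $\sum_n\|S_n^*f_n\|\,\|g_n\|\leq\|S^*f\|\,\|g\|$, and only then regrouping the sum over $n$ into the sum over $v$ using $\bigcup_{v\in V}I_v=I$. You should either adopt that route or supply the $M_n(\lambda_0)$-cancellation together with the convergence bookkeeping.
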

\begin{proof}
First, we show that for all $f=(f_n)_{n\in\N},g=(g_n)_{n\in\N}\in\dom S^*$, the sum $\sum_{n=0}^\infty(S_n^*f_n,g_n)$ converges absolutely. From Cauchy-Bunjakowski and H\"{o}lder inequality, we have
\[
\sum_{n=0}^\infty|(S_n^*f_n,g_n)|\leq\sum_{n=0}^\infty\|S_n^*f_n\|\|g_n\|\leq \|S^*f\|\|g\|<\infty.
\]
Next, using the abstract Green identity \eqref{GI} for the operators $S_n^*$ and changing the order of summation leads to
\begin{align*}
(S^*f,g)-(f,S^*g)&=\sum_{n=0}^{\infty}(S_n^*f_n,g_n)-(f_n,S_n^*g_n)\\ &=\sum_{n=0}^{\infty}(\Gamma_1^{(n)}f_n,\Gamma_0^{(n)}g_n)-(\Gamma_0^{(n)}f_n,\Gamma_1^{(n)}g_n)\\
&=\sum_{v\in V}(\Gamma_1^vf,\Gamma_0^vg)-(\Gamma_0^vf,\Gamma_1^vg),
\end{align*}
where the last equality follows from $\bigcup_{v\in V}I_v=I$.

%Since $g=(g_n)_{n\in\N}\in\dom S^*$ we have from $g_n\in\dom S_n^*$ and the abstract Green identity \eqref{GI} for the operators $S_n^*$ and $f=(f_n)_{n\in\N}\in\dom S^*$ that
%\begin{align*}
%(S^*f,g)-(f,S^*g)&=\sum_{n=0}^{\infty}(S_n^*f_n,g_n)-(f_n,S_n^*g_n)\\ &=\sum_{n=0}^{\infty}(\Gamma_1^{(n)}f_n,\Gamma_0^{(n)}g_n)-(\Gamma_0^{(n)}f_n,\Gamma_1^{(n)}g_n)\\
%&=\sum_{v\in V}(\Gamma_1^vf,\Gamma_0^vg)-(\Gamma_0^vf,\Gamma_1^vg),
%\end{align*}
%where the last equality follows from $\bigcup_{v\in V}I_v=I$ and the fact that only finitely many $g_n$ are non-zero and, hence, we can change the order of the sums above.
For the proof of the second assertion we construct $g=(g_n)_{n\in\N}\in\dom S_{L}^{\min}$ satisfying the equations \eqref{glsystem}. Consider $n\in\N$ and the set $I_v$.  Given that $(n,d)\notin I_v$ for all $d=1,\ldots, d_n$ then we set $g_n:=0$. For $(n,d)\in I_v$, for some $d=1,\ldots, d_n$, the surjectivity of $(\Gamma_0^{(n)},\Gamma_1^{(n)})^\top:\dom S_n^*\rightarrow\Gi_n\times\Gi_n$ for all $n\in\N$ implies that we can choose $g_n$ such that the first and second equation in \eqref{glsystem} hold. From the construction we also have the lower system of equations in \eqref{glsystem} hold.
\end{proof}

Next, we show that $S_L^{\loc}$ is the adjoint of $S_L^{\min}$.
\begin{Proposition}
\label{adjoint}
We have $S_{L}^{\loc}=(S_{L}^{\min})^*$, in particular $S_{L}^{\loc}$ is closed.
\end{Proposition}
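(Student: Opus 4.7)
The plan is to establish the identity $S_L^{\loc}=(S_L^{\min})^*$ by showing both inclusions and relying on the Green identity together with the second part of Lemma \ref{hilfslemma}. First I would note that $S_L^{\min}$ is densely defined because $\dom S\subseteq\dom S_L^{\min}$: any $f\in\dom S$ satisfies $\Gamma_0^{(n)}f_n=\Gamma_1^{(n)}f_n=0$ for every $n$, so trivially $\Gamma_0^v f=\Gamma_1^v f=0$ for all $v$ and the support conditions hold. Hence $(S_L^{\min})^*$ is a well-defined closed operator contained in $S^*$, and the closedness statement will follow automatically from the identity.

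For the inclusion $S_L^{\loc}\subseteq (S_L^{\min})^*$, fix $f\in\dom S_L^{\loc}$ and $g\in\dom S_L^{\min}$. Apply the truncated Green identity \eqref{truncGI}. The defining support condition on $g$ makes the sum over $V$ actually finite. In each surviving summand I would use that $\Gamma_0^v f,\Gamma_0^v g\in\Gi_v$ together with $L_v\Gamma_0^v f=P_{\Gi_v}\Gamma_1^v f$, $L_v\Gamma_0^v g=P_{\Gi_v}\Gamma_1^v g$, and the Hermitian symmetry of $L_v$ on $\Gi_v$ to get
\[
(\Gamma_1^v f,\Gamma_0^v g)=(L_v\Gamma_0^v f,\Gamma_0^v g)=(\Gamma_0^v f,L_v\Gamma_0^v g)=(\Gamma_0^v f,\Gamma_1^v g).
\]
Thus $(S^*f,g)=(f,S^*g)$, which yields $f\in\dom (S_L^{\min})^*$ with $(S_L^{\min})^*f=S^*f$.

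For the reverse inclusion, take $f\in\dom (S_L^{\min})^*$. Since $S\subseteq S_L^{\min}\subseteq S^*$, we have $(S_L^{\min})^*\subseteq S^*$, so $f\in\dom S^*$ and the Green identity \eqref{truncGI} reduces the adjoint relation to
\[
0=\sum_{v\in V}\bigl[(\Gamma_1^v f,\Gamma_0^v g)-(\Gamma_0^v f,\Gamma_1^v g)\bigr]\quad\text{for all }g\in\dom S_L^{\min}.
\]
Here I would invoke the second assertion of Lemma \ref{hilfslemma}: for each fixed $v\in V$, $y_0\in\Gi_v$, and $y_1\in\Gi_v^{\perp}$, one can find $g\in\dom S_L^{\min}$ (of finite support) whose only nontrivial vertex data are $\Gamma_0^v g=y_0$ and $\Gamma_1^v g=y_1+L_v y_0$, collapsing the sum to a single vertex.

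The main obstacle, and the whole point of that auxiliary construction, is recovering the \emph{local} boundary conditions from this \emph{global} orthogonality. I would argue in two steps at each vertex $v$. Setting $y_0=0$ and letting $y_1$ range over $\Gi_v^{\perp}$ gives $(\Gamma_0^v f,y_1)=0$ for all such $y_1$, so $\Gamma_0^v f\in\Gi_v$. Then setting $y_1=0$ and letting $y_0$ range over $\Gi_v$, and using $\Gamma_0^v f\in\Gi_v$ together with $L_v=L_v^*$ on $\Gi_v$, one obtains
\[
(P_{\Gi_v}\Gamma_1^v f,y_0)_{\Gi_v}=(\Gamma_1^v f,y_0)=(\Gamma_0^v f,L_v y_0)=(L_v\Gamma_0^v f,y_0)_{\Gi_v},
\]
hence $P_{\Gi_v}\Gamma_1^v f=L_v\Gamma_0^v f$. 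Since $v$ was arbitrary, $f\in\dom S_L^{\loc}$. Combining both inclusions yields $S_L^{\loc}=(S_L^{\min})^*$, and closedness is immediate from the general fact that adjoints of densely defined operators are closed.
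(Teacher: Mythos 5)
Your proposal is correct and follows essentially the same route as the paper: both directions rest on the truncated Green identity \eqref{truncGI}, the solvability statement \eqref{glsystem} of Lemma \ref{hilfslemma} with the two test choices $y_0=0$ and $y_1=0$, and the Hermitian symmetry of the matrices $L_v$. Your explicit remark that $\dom S\subseteq\dom S_L^{\min}$ (so that the adjoint is well defined and contained in $S^*$) is a small but welcome addition that the paper leaves implicit.
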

\begin{proof}
Let $f\in (S_{L}^{\min})^*$ then we have from \eqref{truncGI} for all $g\in\dom S_{L}^{\min}$
\begin{align}
\label{GIapplied}
0=(S^*f,g)-(f,S^*g)=\sum_{v\in V}(\Gamma_1^vf,\Gamma_0^vg)-(\Gamma_0^vf,\Gamma_1^vg).
\end{align}
%where we used that only finitely many $g_n$ are non-zero.
For this equation we use \eqref{glsystem} from Lemma \ref{hilfslemma} with $y_0=0$ and $y_1\in\Gi_v^\perp$ which leads to $(\Gamma_0^vf,y_1)=0$. Since $y_1$ was arbitrary, we conclude that $\Gamma_0^vf\in\Gi_v$ for all $v\in V$.  Choose $g\in\dom S_{L}^{\min}$ that solves \eqref{glsystem} for $y_1=0$ and arbitrary $y_0\in\Gi_v$. With \eqref{GIapplied} this leads to
\[
0=(\Gamma_1^vf,y_0)-(\Gamma_0^vf,L_vy_0)=(P_{\Gi_v}\Gamma_1^vf-L_v\Gamma_0^vf,y_0).
\]
Since $y_0\in\Gi_v$ was arbitrary, we see $P_{\Gi_v}\Gamma_1^vf=L_v\Gamma_0^v$ for all $v\in V$ this proves $f\in\dom S_{L}^{\loc}$.

Assume conversely that $f\in\dom S_{L}^{\loc}$. For all $g\in\dom S_{L}^{\min}$ we have
\begin{align*}
\sum_{v\in V}(\Gamma_1^vf,\Gamma_0^vg)-(\Gamma_0^vf,\Gamma_1^vg)
&=\sum_{v\in V}(P_{\Gi_v}\Gamma_1^vf,\Gamma_0^vg)-(\Gamma_0^vf,P_{\Gi_v}\Gamma_1^vg)\\
&=\sum_{v\in V}(L_v\Gamma_0^vf,\Gamma_0^vg)-(\Gamma_0^vf,L_v\Gamma_0^vg)\\ &=0
\end{align*}
which implies with \eqref{truncGI}, $f\in\dom (S_L^{\min})^*$.
\end{proof}

We prove the main theorem of this section that allows us to describe
the extension $S_{L}^{\loc}$ with operators on $\ell^2(\widehat{V})$
for a countable index set $\widehat{V}$. For this we use the notation
\begin{align*}
C(\widehat{V}):=\{(f_v)_{v\in\widehat{V}}\in\ell^2(\widehat{V}) ~|~ \text{$\supp f$\ finite}\}.
\end{align*}
Furthermore, for the subspaces $\Gi_v$ of $\C^{|I_v|}$ we use the canonical embedding
\begin{align*}
\begin{split}
\iota_v:\Gi_v\rightarrow\oplus_{n\in\N}\C^{d_n},\quad (x_{(n,d)})_{(n,d)\in I_v}\mapsto(y_{(n,d)})_{(n,d)\in I},\\ y_{(n,d)}:=\begin{cases}x_{(n,d)}, & \text{if $(n,d)\in I_v$,}\\ 0, & \text{otherwise.}\end{cases}
\end{split}
\end{align*}
Therefore $\iota_v(\Gi_v)$ is a subspace of $\Gi$ and we have an orthogonal sum decomposition
\begin{align}
\label{defGV}
\Gi_{\Vc}:=\bigoplus_{v\in V}\iota_v\Gi_v.
\end{align}
In the following, we consider an orthogonal basis $\{b_w\}_{w\in\widehat{V}}$ of the subspace $\Gi_{\Vc}$, which has the property that each $b_w$ is an element of an orthogonal basis for some $\Gi_v$ and $\widehat{V}$ is a countable set of indices.
In the theorem below we will make use of the unitary operator $U:\Gi_{\Vc}\rightarrow\ell^2(\widehat V)$ given by $b_w\mapsto\|b_w\|e_w$.

\begin{Theorem}
\label{dieDS}
%Let $\{\Gi,\widetilde{\Gamma}_0,\widetilde{\Gamma}_1\}$ be the boundary triplet for $S^*$ from Theorem \ref{handyreg}.
Let $\{S_n\}_{n\in\N}$ be a family of densely defined symmetric linear operators $S_n\in\mathcal{C}(\Hi_n)$ with boundary triplets $\{\Gi_n,\Gamma_0^{(n)},\Gamma_1^{(n)}\}$ for $S_n^*$ and Weyl functions $M_n$
and $(\lambda_0-\varepsilon,\lambda_0+\varepsilon)\subseteq\bigcap_{n=0}^{\infty}\rho(S_{n0})$ for some $\varepsilon>0$ and $\lambda_0\in\R$.
Consider $S_L^{\loc}$ with Hermitian matrices $L_v$, subspaces $\Gi_v$, $\Gi_{\Vc}$ given by \eqref{defGV} with orthogonal basis $\{b_w\}_{w\in\widehat{V}}$
and the operator $L=\oplus_{v\in V} L_v$ on $\Gi_{\Vc}$. Then the following holds.
\begin{itemize}
\item[\rm (a)]
The operator $L_{\min}$ in $\ell^2(\widehat{V})$ with $\dom L_{\min}=C(\widehat{V})$ given
as an infinite matrix operator,
\begin{align*}
L_{\min}:=\left(\frac{((L-\plus M_n(\lambda_0))b_v,b_w)}{\|Rb_v\|\|Rb_w\|}\right)_{v,w\in \widehat V},\quad R:=\nplus\sqrt{\|M_n'(\lambda_0)\|}I_{\C^{d_n}},
\end{align*}
with $\dom R:=U^{-1}C(\widehat V)$, satisfies $S_{L}^{\min}=S_{L_{\min}}$ and $S_L^{\loc}=S_{L_{\min}^*}$.

\item[\rm (b)] We have $n_{\pm}(S_{L}^{\min})=n_{\pm}(L_{\min})$ and there is a bijective correspondence between the self-adjoint extensions of $L_{\min}$ and the self-adjoint extensions of $S_{L}^{\min}$.
\item[\rm (c)] Assume that $\nplus S_{n0}=S_F\geq \gamma$ with $\gamma>0$ and that $\widetilde{M}$ given by \eqref{tildeM} satisfies $\widetilde{M}(\lambda)\rightrightarrows-\infty$ for $\lambda\rightarrow-\infty$. Let $\widetilde{L}$ be a self-adjoint extension of $L_{\min}$ which is semi-bounded from below then $S_{\widetilde{L}}$ is semi-bounded from below.
\end{itemize}
\end{Theorem}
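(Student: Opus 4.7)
The plan is to reduce Theorem \ref{dieDS} to the boundary triplet results of Propositions \ref{bekannt} and \ref{negEWundso}, applied to the regularized boundary triplet $\{\Gi, \widetilde\Gamma_0, \widetilde\Gamma_1\}$ from Theorem \ref{handyreg}. Part (a) contains the main computation identifying $L_{\min}$; parts (b) and (c) then drop out from (a) together with these propositions.

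For (a), I would first translate the defining conditions of $\dom S_L^{\min}$ from the original boundary mappings to the regularized ones. Writing $M_0 := \nplus M_n(\lambda_0)$, the formulas $\Gamma_0 = R^{-1}\widetilde\Gamma_0$ and $\Gamma_1 = R\widetilde\Gamma_1 + M_0 R^{-1}\widetilde\Gamma_0$ follow directly from \eqref{gutestripel}. Since $\Gi_{\Vc} = \bigoplus_{v\in V}\iota_v \Gi_v$ and $L = \oplus_{v\in V} L_v$, the family of conditions $\{\Gamma_0^v f \in \Gi_v,\ L_v\Gamma_0^v f = P_{\Gi_v}\Gamma_1^v f\}_{v\in V}$ bundles into the single pair
\[
R^{-1}\widetilde\Gamma_0 f \in \Gi_{\Vc} \quad \text{and} \quad (L - P_{\Gi_{\Vc}} M_0)\, R^{-1}\widetilde\Gamma_0 f = P_{\Gi_{\Vc}} R \widetilde\Gamma_1 f,
\]
together with the finite support requirement on the boundary values.

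Next, I would expand $R^{-1}\widetilde\Gamma_0 f = \sum_w c_w b_w$ in the orthogonal basis $\{b_w\}_{w\in\widehat V}$ of $\Gi_{\Vc}$ and pair the second equation with each $b_w$; the projection $P_{\Gi_{\Vc}}$ drops out upon pairing since $b_w \in \Gi_{\Vc}$. With the rescaling $z_w := \|R b_w\|\, c_w$, dividing by $\|R b_w\|$ yields
\[
\sum_v \frac{((L - M_0)b_v,\, b_w)}{\|R b_v\|\,\|R b_w\|}\,z_v = \frac{(R\widetilde\Gamma_1 f,\, b_w)}{\|R b_w\|}, \qquad w \in \widehat V,
\]
which is exactly $L_{\min} z = y$ on $\ell^2(\widehat V)$, and the finite-support requirement becomes $z \in C(\widehat V) = \dom L_{\min}$. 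Interpreting $z$ and $y$ as the $\ell^2(\widehat V)$-parameterizations of $\widetilde\Gamma_0 f$ and $\widetilde\Gamma_1 f$ through the unitary $U$, this is the defining relation of $S_{L_{\min}}$ in the form \eqref{Apl}, so $S_L^{\min} = S_{L_{\min}}$. Passing to adjoints via Proposition \ref{adjoint} and Proposition \ref{bekannt}(b) then gives $S_L^{\loc} = (S_L^{\min})^* = S_{L_{\min}^*}$.

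Part (b) is immediate from Proposition \ref{bekannt}(b),(c) applied to $L_{\min}$: the defect numbers $n_{\pm}(S_{L_{\min}})=n_{\pm}(L_{\min})$ and the bijective correspondence of self-adjoint extensions pass from $L_{\min}$ to $S_{L_{\min}} = S_L^{\min}$ via (a). For (c) I appeal to Proposition \ref{negEWundso}(b): by \eqref{invariant} and the hypothesis $\nplus S_{n0} = S_F \geq \gamma$, the zero extension of the regularized triplet is the Friedrichs extension, and $\widetilde M(\lambda) \rightrightarrows -\infty$ is precisely the hypothesis on the regularized Weyl function; semi-boundedness of $\widetilde L$ therefore transfers to $S_{\widetilde L}$. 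The main obstacle lies in the rescaling step of (a): since $\{R b_w\}$ is generally not orthogonal in $R\Gi_{\Vc}$, the map $\widetilde\Gamma_0 f \mapsto z$ is not an isometry, so one must interpret $S_{L_{\min}}$ through the parameterization $c_w \mapsto z_w = \|R b_w\| c_w$ rather than by transporting a single unitary across both $\widetilde\Gamma_0$ and $\widetilde\Gamma_1$; the normalizations $\|R b_v\|\|R b_w\|$ in the matrix entries of $L_{\min}$ are precisely what absorb this mismatch.
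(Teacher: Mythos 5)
Your overall route coincides with the paper's: pass to the regularized triplet of Theorem \ref{handyreg}, bundle the vertex conditions over $\Gi_{\Vc}$, pair with the basis $\{b_w\}$, and rescale by $\|Rb_w\|$ to produce the matrix $L_{\min}$; parts (b) and (c) are then read off from Propositions \ref{bekannt} and \ref{negEWundso} exactly as in the paper. Your computation leading to $\sum_v \frac{((L-M_0)b_v,b_w)}{\|Rb_v\|\,\|Rb_w\|}z_v = y_w$ is correct and reproduces the paper's matrix entries.

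The gap is in your final step, and you have in fact named it yourself without closing it. To assert $S_L^{\min}=S_{L_{\min}}$ \emph{in the sense of \eqref{Apl}} and then invoke Proposition \ref{bekannt} for the defect numbers and the extension correspondence in (b), the operator $L_{\min}$ must be the parameter of $S_L^{\min}$ with respect to an actual boundary triplet on $\ell^2(\widehat V)$. A mere ``parameterization'' $c_w\mapsto \|Rb_w\|c_w$ that is not isometric does not achieve this: if the map carrying $\widetilde\Gamma_0 f$ and $P_{\overline{\ran R}}\widetilde\Gamma_1 f$ to $z$ and $y$ is not unitary, the Green identity is destroyed and none of the transfer results (self-adjointness, $n_\pm$, Krein formula) apply to $L_{\min}$. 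The paper resolves this by observing that $\{Rb_w\}_{w\in\widehat V}$ \emph{is} an orthogonal system — vectors attached to distinct $v\in V$ have disjoint supports, and $R$ is a positive diagonal operator, so within each finite-dimensional $\Gi_v$ the orthogonal basis can be taken to diagonalize the compression of $R^2$ (and in the applications $\dim\Gi_v=1$, so nothing needs to be arranged) — whence $Rb_w\mapsto\|Rb_w\|e_w$ extends to a unitary $\hat U$ and Lemma \ref{unitary} exhibits $L_{\min}=\hat U\widetilde L_{\min}\hat U^*$ as a bona fide boundary parameter. You should either justify orthogonality of $\{Rb_w\}$ along these lines or explain how your rescaled coordinates constitute a boundary triplet; as written, (b) does not follow from your version of (a).
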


\begin{proof}
For the proof of (a), we use the regularized boundary triplet $\{\Gi,\widetilde{\Gamma}_0,\widetilde{\Gamma}_1\}$ defined in \eqref{gutestripel} for $f=(f_n)_{n\in\N}\in\dom S^*$ as
\begin{align*}
\widetilde{\Gamma}_0f&=(\sqrt{\|M_n'(\lambda_0)\|}\Gamma_0^{(n)}f_n)_{n\in\N},\\
\widetilde{\Gamma}_1f&=(\|M_n'(\lambda_0)\|^{-1/2}(\Gamma_1^{(n)}-M_n(\lambda_0)\Gamma_0^{(n)})f_n)_{n\in\N}.
\end{align*}
Consider $M:=\nplus M_n(\lambda_0)$ with $\dom M:=U^{-1}C(\widehat{V})$ and let $\widetilde{L}_{\min}$ be given by
\[
\dom\widetilde{L}_{\min}:=R U^{-1}C(\widehat{V}),\quad \widetilde{L}_{\min}f:=P_{\overline{\ran R}}R^{-1}(L-M)R^{-1}f.
\]
We show that
\begin{align}
\label{firststep}
\dom S_{L}^{\min}=\{f\in\dom S^* | \widetilde{L}_{\min}\widetilde{\Gamma}_0f=P_{\overline{\ran R}}\widetilde{\Gamma}_1f,\ \widetilde{\Gamma}_0f\in RU^{-1}C(\widehat{V}),\ \supp(P_{\Gi_v}\Gamma_1^v)_{v\in V} \mbox{ finite}\}.
\end{align}
Let $f\in\dom S^*$ be in the set on the right hand side of \eqref{firststep}.
Obviously $\supp\Gamma_0f$ is finite and rewriting the conditions on the right hand side of \eqref{firststep} we obtain
\begin{align*}
P_{\overline{\ran R}}R^{-1}(L-M)R^{-1}R(\Gamma_0^{(n)}f_n)_{n\in\N}&= \widetilde{L}_{\min}\widetilde{\Gamma}_0f\\
&=P_{\overline{\ran R}}\widetilde{\Gamma}_1f\\&=P_{\overline{\ran R}}(\|M_n(\lambda_0)\|^{-1/2}(\Gamma_1^{(n)}-M_n(\lambda_0)\Gamma_0^{(n)})f_n)_{n\in\N}
\end{align*}
and therefore
\begin{align}
\label{gleich}
P_{\overline{\ran R}}R^{-1}L(\Gamma_0^{(n)}f_n)_{n\in\N}=P_{\overline{\ran R}}(\|M_n(\lambda_0)\|^{-1/2}\Gamma_1^{(n)}f_n)_{n\in\N}.
\end{align}
Note that $(\|M_n(\lambda_0)\|^{-1/2}\Gamma_1^{(n)}f_n)_{n\in\N}\in\Gi$, since $\widetilde{\Gamma}_1f\in\Gi$ and $\supp\Gamma_0f$ is finite. The definition of $R$ implies that $\{Rb_w\}_{w\in \widehat{V}}$ is an orthogonal basis of $\ran R$. Furthermore, we have from \eqref{gleich} that for all $w\in\widehat V$
\[
(P_{\overline{\ran R}}R^{-1}L(\Gamma_0^{(n)}f_n)_{n\in\N},Rb_w)=(P_{\overline{\ran R}}(\|M_n(\lambda_0)\|^{-1/2}\Gamma_1^{(n)}f_n)_{n\in\N},Rb_w)
\]
which is equivalent to
\[
((L_v\Gamma_0^vf)_{v\in V},b_w)=(L(\Gamma_0^{(n)}f_n)_{n\in\N},b_w)=((\Gamma_1^{(n)}f_n)_{n\in\N},b_w)=(P_{\Gi_v}(\Gamma_1^{v}f)_{v\in V},b_w)
\]
for all $w\in\widehat V$. Note that $(\Gamma_1^{(n)}f_n)_{n\in\N}$ and $L(\Gamma_0^{(n)}f_n)_{n\in\N}$ are in general not in $\Gi$ but the formal scalar product of these sequences with $b_v$ exists, because the support of $b_v$ is finite.
Since for each $v\in V$ there exists a subset of $\{b_w\}_{w\in \widehat V}$ which is an orthogonal basis for $\Gi_v$, we see that
\[
L_v\Gamma_0^vf=P_{\Gi_v}\Gamma_1^vf
\]
for all $v\in V$ and all $f$ in the set of the right hand side of \eqref{firststep}.
%to represent the orthogonal projection $P_{R\Gi_{\Vc}}$ as follows
%\begin{align}
%\label{lhs}
%\begin{split}
%P_{\overline{\ran R}}R^{-1}L(\Gamma_0^{(n)}f_n)_{n\in\N}&=\sum_{v\in \widehat{V}}(R^{-1}L(\Gamma_0^{(n)}f_n)_{n\in\N},Rb_v)_{\Gi}\frac{Rb_v}{\|Rb_v\|^2}
%\\&=\sum_{v\in \widehat{V}}(L(\Gamma_0^{(n)}f_n)_{n\in\N},b_v)\frac{Rb_v}{\|Rb_v\|^2}
%\end{split}
%\end{align}
%and on the other hand we have
%\begin{align}
%\label{rhs}
%\begin{split}
%P_{\overline{\ran R}}(R_n^{-1}\Gamma_1^{(n)}f_n)_{n\in\N}&=\sum_{v\in \widehat{V}} ((R_n^{-1}\Gamma_1^{(n)}f_n)_{n\in\N},Rb_v)\frac{R b_v}{\|Rb_v\|^2}\\
%&=\sum_{v\in \widehat{V}} ((\Gamma_1^{(n)}f_n)_{n\in\N},b_v)\frac{R b_v}{\|Rb_v\|^2}.
%\end{split}
%\end{align}
%Since \eqref{gleich} holds, \eqref{lhs} and \eqref{rhs} are equal and we compare the coefficients. By the assumption on $f$, the sums in \eqref{lhs} are finite, therefore also the sums in \eqref{rhs} are finite.
%Furthermore we have
%\[
%(L(\Gamma_0^{(n)})_{n\in\N}-(\Gamma_1^{(n)}f_n)_{n\in\N},b_v)=0,\quad \text{for all $v\in \widehat{V}$.}
%\]
%This implies that $L_v\Gamma_0^vf=P_{\Gi_v}\Gamma_1^vf$ holds for finitely many $v\in V$ and $\Gamma_0^vf=P_{\Gi_{v}}\Gamma_1^vf=0$ otherwise.
Moreover, $\widetilde{\Gamma}_0f\in RU^{-1} C(\widehat{V})$, hence $\Gamma_0f\in U^{-1} C(\widehat{V})$ and,
by construction, $\Gamma_0^vf\in\Gi_v$ follows.
Thus we have proven that $f\in\dom S_{L}^{\min}$.

Assume conversely that $f\in\dom S_L^{\min}$ then we have that for finitely many $v\in V$ that
\[
L_v\Gamma_0^vf=P_{\Gi_v}\Gamma_1^v,\quad \Gamma_0^vf\in\Gi_v
\]
and $\Gamma_0^vf=P_{\Gi_v}\Gamma_1^vf=0$  otherwise.
Obviously $\widetilde{\Gamma}_0f\in RU^{-1}C(\widehat{V})$ and $\supp(P_{\Gi_v}\Gamma_1^vf)_{v\in V}$
is finite. Furthermore, it is also clear from the calculations in the first part of the proof,
that for all $w\in\widehat V$
\[
(P_{\overline{\ran R}}R^{-1}L(\Gamma_0^{(n)}f_n)_{n\in\N},Rb_w)=(P_{\overline{\ran R}}(\|M_n(\lambda_0)\|^{-1/2}\Gamma_1^{(n)}f_n)_{n\in\N},Rb_w)
\]
holds. Since span$\, \{Rb_w\}_{w\in\widehat V}$ is dense in $\overline{\ran R}$ we have
\[
\widetilde L_{min}\widetilde\Gamma_0f=P_{\overline{\ran R}}\widetilde\Gamma_1f.
\]
Thus the identity \eqref{firststep} holds.

We apply Lemma \ref{unitary} to obtain a different representation of $S_{\widetilde{L}_{\min}}$ in terms of the boundary triplet
$\{\hat U\Gi,\hat U\Gamma_0,\hat U\Gamma_1\}$ where $\hat U:\ran R\rightarrow\ell^2(\widehat V)$ is given by $Rb_w\mapsto \|Rb_w\|e_w$. and with the operator  $L_{\min}=\hat U\widetilde L_{\min}\hat U^*$ which is given by
\begin{align*}
\frac{(\widetilde{L}_{\min}Rb_v,Rb_w)}{\|Rb_v\|\|Rb_w\|}
&=\frac{(P_{\overline{\ran R}}R^{-1}(L-M)R^{-1}Rb_v,Rb_w)}{\|Rb_v\|\|Rb_w\|}\\[1ex]
&=\frac{(R^{-1}(L-M)b_v,P_{\overline{\ran R}}Rb_w)}{\|Rb_v\|\|Rb_w\|}\\[1ex]
&=\frac{((L-\plus M_n(\lambda_0))b_v,b_w)}{\|Rb_v\|\|Rb_w\|}=(L_{\min})_{v,w}
\end{align*}
The assertion (b) follows immediately from (a) and Proposition \ref{bekannt}. An application of Proposition \ref{negEWundso} (b) yields (c).
\end{proof}

Under the assumption that the direct sum triplet \eqref{dstriplet} is a boundary triplet for $S^*$, we have that $\nplus M_n(\lambda_0)$, $R$ and $R^{-1}$ are bounded and we obtain the following special case of Theorem \ref{dieDS}. For quantum graphs with edge length bounded from below, this result was also obtained in \cite{LSV14}.

\begin{Corollary}
Assume that the triplet $\{\Gi,\Gamma_0,\Gamma_1\}$ given by \eqref{dstriplet} is a boundary triplet for $S^*$, then $S_{L}^{\loc}$ has the following properties:
\begin{itemize}
\item[\rm (a)] $S_L^{\loc}$ is self-adjoint.
\item[\rm (b)] Assume that $S^*|_{\ker\Gamma_0}=S_F\geq\gamma$ with $\gamma>0$ and that $\nplus M_n(\lambda)\rightrightarrows-\infty$, as $\lambda\rightarrow-\infty$, then $S_L^{\loc}$ is semi-bounded from below if and only if there exists  $C>-\infty$ with $(L_vx,x)\geq C\|x\|^2$ for all $x\in\Gi_v$ and all $v\in V$.
\end{itemize}
\end{Corollary}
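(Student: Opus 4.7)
The plan is to identify $S_L^{\loc}$ with an extension of the form \eqref{Apl} for a canonical parameter operator, and then to invoke Propositions \ref{bekannt} and \ref{negEWundso} directly; the regularization machinery of Theorem \ref{dieDS} becomes unnecessary under the hypothesis that \eqref{dstriplet} is itself a boundary triplet for $S^*$.

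First I would introduce the operator $L := \bigoplus_{v \in V} L_v$ acting on the closed subspace $\Gi_{\Vc} \subseteq \Gi$ from \eqref{defGV}, equipped with the maximal domain $\dom L = \{\oplus_v \iota_v x_v \in \Gi_{\Vc} : \sum_v \|L_v x_v\|^2 < \infty\}$. This domain contains the finitely supported sequences and is therefore dense in $\Gi_{\Vc}$. Using the mutual orthogonality of the subspaces $\iota_v \Gi_v$, the projection $P_{\Gi_{\Vc}}$ decomposes blockwise, so the pointwise constraints $\Gamma_0^v f \in \Gi_v$ and $L_v \Gamma_0^v f = P_{\Gi_v} \Gamma_1^v f$ in $\dom S_L^{\loc}$ are jointly equivalent to $\Gamma_0 f \in \dom L$ together with $L \Gamma_0 f = P_{\Gi_{\Vc}} \Gamma_1 f$. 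This yields the identification $S_L^{\loc} = S_L$ in the sense of \eqref{Apl}, and is the only step that requires some bookkeeping.

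Given this identification, part (a) follows from Proposition \ref{bekannt}(a) as soon as $L$ itself is self-adjoint; and $L$ is indeed self-adjoint, as a countable orthogonal direct sum of the bounded Hermitian matrices $L_v$ on the finite-dimensional spaces $\Gi_v$, taken with its maximal domain. For part (b), the standing hypotheses $S^*|_{\ker\Gamma_0} = S_F \geq \gamma$ and $\nplus M_n(\lambda) \rightrightarrows -\infty$ are precisely those required by Proposition \ref{negEWundso}(b) applied to the direct sum triplet, which gives that $S_L^{\loc}$ is semi-bounded from below if and only if $L$ is. Finally, since $(Lx, x) = \sum_v (L_v x_v, x_v)$ for $x = \oplus_v \iota_v x_v \in \dom L$, semi-boundedness of $L$ from below by some $C$ is equivalent to the uniform bound $(L_v x, x) \geq C \|x\|^2$ for all $v \in V$ and all $x \in \Gi_v$: sufficiency by summing over $v$, and necessity by testing against vectors supported in a single summand $\iota_v \Gi_v$.
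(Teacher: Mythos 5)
Your proof is correct, but it takes a genuinely different route from the paper's. The paper proves the corollary as a literal specialization of Theorem \ref{dieDS}: it works with the regularized parameter $L_{\min}$ on $C(\widehat V)$, observes that the diagonal part $\bigl(\tfrac{(Lb_v,b_w)}{\|Rb_v\|\|Rb_w\|}\bigr)$ is unitarily equivalent to a multiplication operator, invokes \cite[Theorem 2.12]{CMP13} to get boundedness of $R$, $R^{-1}$ and $\nplus M_n(\lambda_0)$ from the boundary-triplet hypothesis, and concludes essential self-adjointness of $L_{\min}$ via Kato--Rellich \cite[Theorem V.4.4]{K80}; part (b) then also passes through the boundedness of these regularizing operators. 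You instead bypass the regularization entirely: you identify $S_L^{\loc}$ with $S_L$ in the sense of \eqref{Apl} for the maximal-domain direct sum $L=\bigoplus_v L_v$ on $\Gi_{\Vc}$, and apply Propositions \ref{bekannt}(a) and \ref{negEWundso}(b) directly to the unregularized triplet \eqref{dstriplet}. The identification step is legitimate precisely because the hypothesis forces $\Gamma_0f,\Gamma_1f\in\Gi$ for every $f\in\dom S^*$, so the blockwise conditions assemble into $\Gamma_0f\in\dom L$ and $L\Gamma_0f=P_{\Gi_{\Vc}}\Gamma_1f$, and $L$ with maximal domain is self-adjoint as an orthogonal sum of Hermitian matrices. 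What your approach buys is a shorter, more self-contained argument that does not need the common real point $\lambda_0$ in $\bigcap_n\rho(S_{n0})$ for part (a), nor any boundedness of $R^{\pm 1}$; what the paper's approach buys is uniformity with the general framework (the corollary is exhibited as the degenerate case of Theorem \ref{dieDS}, with the same object $L_{\min}$ appearing throughout), which is why the authors route through it even though it is heavier here.
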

\begin{proof}
Since $S_L^{\loc}$ is closed, it remains to show by Theorem \ref{dieDS} that $L_{\min}$ is essentially self-adjoint.
Every $L_v$ is unitarily equivalent to a diagonal matrix and therefore the operator
$(\frac{(Lb_v,b_w)}{\|Rb_v\|\|Rb_w\|})_{v,w\in\widehat{V}}$ is unitarily equivalent to a densely defined multiplication operator
 on $\ell^2(\widehat V)$, and hence essentially self-adjoint.
 Since  $\{\Gi,\Gamma_0,\Gamma_1\}$ is a boundary triplet, \cite[Theorem 2.12]{CMP13} implies that the operators $R, R^{-1}$ and $\nplus M_n(\lambda_0)$ are bounded. Therefore  $L_{\min}$ is just a bounded and symmetric perturbation of an essentially self-adjoint operator and hence essentially self-adjoint according to the Kato-Rellich theorem \cite[Theorem V.4.4]{K80}. Assertion (b)
is a consequence of the boundedness of $R$, $R^{-1}$ and of $\nplus M_n(\lambda_0)$ and follows from Theorem \ref{dieDS} (a).
\end{proof}

Since \eqref{dstriplet} is in general not a boundary triplet, we use the results of \cite{GHKLW13,KL11} to provide conditions on the self-adjointness of $S_{L}^{\loc}$ and the  discreteness of the spectrum of all self-adjoint extensions in the theorem below. For this we associate with $S_{L}^{\loc}$ the formal discrete Laplacian $D_{L}$ on the weighted space
\[
\ell^2(\widehat{V},m):=\bigg\{(x_v)_{v\in\widehat{V}}\in\C^{\widehat{V}}~\big|~\sum_{v\in \widehat{V}}m(v)|x_v|^2<\infty\bigg\}
\]
with $m(v):=\|Rb_v\|^2$, where $b_v$ is an element of an orthogonal basis of the subspace $\Gi_{\Vc}$ defined in \eqref{defGV} and the scalar product in $\ell^2(\widehat{V},m)$ is given by
$$
(x,y)_m:=\sum_{v\in \widehat{V}}m(v)x_v\overline{y_v}.
$$
We define an operator $D_L$ with domain $\dom D_L:=C(\widehat{V})$ via
\begin{align}
\begin{split}
\label{dislap}
(D_{L}f)_v&:=\frac{1}{\|Rb_v\|^2}\bigg(\sum_{w\in \widehat{V}}b(v,w)(f_v-f_w)+c(v)f_v\bigg),\\
b(v,w)&:=\left(\Big(\nplus M_n(\lambda_0)-L\Big)b_v,b_w\right),\quad v\neq w,\quad b(v,v):=0,\\
c(v)&:=\left(\Big(L-\nplus M_n(\lambda_0)\Big)b_v,b_v\right)-\sum_{w\in \widehat{V}}b(v,w).
\end{split}
\end{align}
The elements of $\{b_v\}_{v\in\widehat{V}}$ have finite support, and if $b_{v_1}$ and $b_{v_2}$ are elements of a basis for $\Gi_{w_1}$ and $\Gi_{w_2}$ with $w_1\neq w_2$ then $\supp b_{v_1}\cap\supp b_{v_2}=\emptyset$. Also, the support of $\Big(\nplus M_n(\lambda_0)-L\Big)b_v$
(considered as a sequence) is finite. Hence, for fixed $w\in \widehat{V}$ we have
 $b(v,w)\neq 0$ for only finitely many $v\in \widehat{V}$.
As in \cite{EKMN17,KL10} we consider the \textit{weighted degree}
\begin{align}
\label{defDeg}
{\rm Deg}:\widehat{V}\rightarrow(0,\infty),\quad v\mapsto\frac{1}{\|Rb_v\|^2}\sum_{w\in\widehat{V}}b(v,w).
\end{align}
\begin{Theorem}
\label{genresultvorne}
Consider the operator $S_{L}^{\loc}$ and the associated discrete Laplacian \eqref{dislap}. Assume that $b(v,w)\geq 0$ holds for all $v,w\in \widehat{V}$. Then the following holds.
\begin{itemize}
\item[\rm (a)] The operator $S_{L}^{\loc}$ is self-adjoint if one of the following conditions holds.
\begin{itemize}
\item[\rm (i)] Assume that $\inf_{v\in \widehat{V}}\frac{c(v)}{\|Rb_v\|^2}>-\infty$ and that for all sequences $\{v_n\}_{n\in\N}$ in $\widehat{V}$ with $b(v_n,v_{n+1})>0$ for all $n\in\N$ we have $\sum_{n=1}^{\infty} \|Rb_{v_n}\|^2=\infty$.
\item[\rm (ii)] The weighted degree ${\rm Deg}$ is bounded.
\end{itemize}
\item[\rm (b)] All self-adjoint extensions of $S_L^{\min}$ are in one-to-one correspondence with the self-adjoint extensions of $D_L$.
\item[\rm (c)] All self-adjoint extensions $\widehat{S}$ of $S_{L}^{\min}$ satisfy  $(\widehat{S}-\lambda)^{-1}\in\mathfrak{S}_1(\Hi)$ for some  $\lambda\in\rho(\widehat S)$ if the following conditions hold.
\begin{itemize}
\item[\rm (i)] For all $v,w\in \widehat{V}$ there exists $k\in\N$ and  $v_0,\ldots,v_k$ such that $v_0=v$, $v_k=w$ and $b(v_i,v_{i+1})>0$ for all $i=0,\ldots,k-1$.
\item[\rm (ii)] Let $\big(\plus S_{n0}-\lambda\big)^{-1}\in\mathfrak{S}_1(\Hi)$ for $\lambda\in\rho\big(\plus S_{n0}\big)$.
\item[\rm (iii)] Let $\sum\limits_{v,w\in V, b(v,w)\neq 0}b(v,w)^{-1}<\infty$, $\sum_{v\in \widehat{V}}\|R b_v\|^2<\infty$, $\inf_{v\in \widehat{V}}\frac{c(v)}{\|Rb_v\|^2}>-\infty$.
\end{itemize}
\item[\rm (d)] Assume that $\plus S_{n0}=S_F\geq\gamma$ with $\gamma>0$ and that $\widetilde{M}(\lambda)\rightrightarrows-\infty$ for $\lambda\rightarrow-\infty$ and $\inf_{v\in \widehat{V}}\frac{c(v)}{\|Rb_v\|^2}>-\infty$ then all
    self-adjoint extensions of  $S_L^{\min}$ are semi-bounded from below.
    %restrictions of $S_{L}^{\loc}$ are semi-bounded from below.
\end{itemize}
\end{Theorem}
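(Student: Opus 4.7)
The plan is to reduce Theorem~\ref{genresultvorne} to known results about weighted discrete graph Laplacians by identifying $D_L$ as a unitary transform of the matrix operator $L_{\min}$ from Theorem~\ref{dieDS}, and then transferring the desired spectral properties through Proposition~\ref{bekannt}.

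The starting observation is that the map $U_m \colon \ell^2(\widehat V, m) \to \ell^2(\widehat V)$ given by $(U_m f)_v := \|R b_v\| f_v$ is unitary and satisfies $U_m D_L U_m^{-1} = L_{\min}$ on $C(\widehat V)$. A direct computation yields this: using $b(v,v) = 0$ together with the identity $c(v) + \sum_w b(v,w) = ((L - \nplus M_n(\lambda_0)) b_v, b_v)$ hard-wired into \eqref{dislap}, the sum in $(D_L f)_v$ collapses to $\frac{1}{\|R b_v\|^2} \sum_w ((L - \nplus M_n(\lambda_0)) b_v, b_w) f_w$, which after conjugation by $U_m$ reproduces the matrix entries of $L_{\min}$ in Theorem~\ref{dieDS}. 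Hence $D_L$ and $L_{\min}$ share every self-adjointness, extension and spectral property we care about.

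For part (a), the assumption $b(v,w) \geq 0$ places $D_L$ into the class of weighted discrete graph Laplacians studied in \cite{KL11,GHKLW13}. Condition (a)(i) is a standard essential-self-adjointness criterion (every infinite $b$-path has infinite total measure $\sum_n \|R b_{v_n}\|^2$, together with a lower bound on $c(v)/\|R b_v\|^2$), which yields essential self-adjointness of $D_L$ on $C(\widehat V)$; condition (a)(ii) forces $D_L$ to be bounded, and essential self-adjointness is then trivial. In either case $L_{\min}$ is essentially self-adjoint, $L_{\min}^*$ is self-adjoint, and Theorem~\ref{dieDS}(a) combined with Proposition~\ref{bekannt}(a) shows that $S_L^{\loc} = S_{L_{\min}^*}$ is self-adjoint. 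Part (b) is immediate from Theorem~\ref{dieDS}(b) and the unitary equivalence $U_m$. For part (d), $b(v,w) \geq 0$ renders the Dirichlet-form part of $D_L$ non-negative while the diagonal part $v \mapsto c(v)/\|R b_v\|^2$ is bounded from below by hypothesis, so every self-adjoint extension of $L_{\min}$ is semi-bounded from below by the form-sum arguments in \cite{GHKLW13}; since the hypotheses of (d) are exactly those of Theorem~\ref{dieDS}(c), that statement transports semi-boundedness to each $S_{\widetilde L}$.

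Part (c) is the delicate one and is the main obstacle. Every self-adjoint extension $\widehat S$ of $S_L^{\min}$ has the form $S_{\widetilde L}$ for some self-adjoint extension $\widetilde L$ of $L_{\min}$; condition (c)(ii) supplies the hypothesis $(\nplus S_{n0} - \lambda)^{-1} \in \mathfrak{S}_1(\Hi)$ needed in Proposition~\ref{bekannt}(e), thereby reducing trace-class resolvent of $\widehat S$ to trace-class resolvent of $\widetilde L$, equivalently of $U_m^{-1} \widetilde L U_m$ as a self-adjoint extension of $D_L$ in $\ell^2(\widehat V, m)$. The latter is precisely the discreteness-of-spectrum / trace-class criterion for weighted graph Laplacians proved in \cite{GHKLW13} under the connectedness assumption (c)(i) and the summability bounds in (c)(iii). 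The hard technical point is to make sure that the trace-class criterion from \cite{GHKLW13} is invoked for \emph{every} self-adjoint extension (not just the Friedrichs one) and that the Schatten-$\mathfrak{S}_1$ class is genuinely preserved along the unitary $U_m$ and through the potentially non-surjective embedding of $\ell^2(\widehat V)$ into the boundary space $\Gi$ underlying Proposition~\ref{bekannt}(e).
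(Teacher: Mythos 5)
Your overall strategy coincides with the paper's: conjugate $L_{\min}$ by the unitary $U_m\colon \ell^2(\widehat V,m)\to\ell^2(\widehat V)$, $(x_v)\mapsto(\|Rb_v\|x_v)$, to obtain $D_L$, feed $D_L$ into the discrete-Laplacian results of \cite{KL10,KL11,GHKLW13}, and transport the conclusions back through Theorem \ref{dieDS} and Proposition \ref{bekannt} (using (e) for the $\mathfrak{S}_1$ statement and the Dirichlet-form estimate plus Proposition \ref{negEWundso} for semi-boundedness). Parts (b), (d) and the skeleton of (a) and (c) are argued exactly as in the paper.

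There are, however, two concrete soft spots. First, in (a)(ii) you claim that boundedness of ${\rm Deg}$ ``forces $D_L$ to be bounded, and essential self-adjointness is then trivial.'' This is false as stated: ${\rm Deg}$ involves only the edge weights $b(v,w)$, and condition (a)(ii) imposes no bound whatsoever on $c(v)/\|Rb_v\|^2$, so the diagonal part of $D_L$ may be unbounded. What boundedness of ${\rm Deg}$ gives (via \cite[Theorem 11]{KL10}) is boundedness of the operator $D_0$ obtained by setting $c\equiv 0$; one then writes $D_L$ as a bounded symmetric perturbation of the (always essentially self-adjoint) multiplication operator $(x_v)\mapsto(c(v)\|Rb_v\|^{-2}x_v)$ on $C(\widehat V)$ and invokes Kato--Rellich. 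Second, the results you cite from \cite{KL11} and \cite{GHKLW13} are formulated for Laplacians with non-negative potential, whereas the hypotheses of (a)(i), (c) and (d) only require $\inf_v c(v)/\|Rb_v\|^2>-\infty$. The paper closes this by first proving everything for $c\geq 0$ and then writing $D_L$ as a bounded symmetric perturbation of the operator $D_L^+$ in which $c$ is replaced by $c^+=\max\{c,0\}$, so that self-adjointness (Kato--Rellich), semi-boundedness and the Schatten-class property of resolvents survive the perturbation. Your draft skips this reduction entirely, and it also leaves the ``hard technical point'' of (c) --- that \cite[Theorem 5.1]{GHKLW13} indeed covers \emph{all} self-adjoint extensions of $D_L$ and not only the Friedrichs one --- flagged but unresolved; the paper settles this by combining that theorem (whose hypotheses are met thanks to (c)(i), (c)(iii) and $\sum_v\|Rb_v\|^2<\infty$) with the bijective correspondence of extensions from Theorem \ref{dieDS}(b) before applying Proposition \ref{bekannt}(e).
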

\begin{proof}
First, we prove the results for $c(v)\geq 0$. To prove (a), we use that by Proposition \ref{adjoint} the operator $S_{L}^{\loc}$ is
closed. It remains by Theorem \ref{dieDS}~(b) and Proposition \ref{bekannt}~(a) to show that the operator given by
\[
(L_{\min})_{v,w}:=\frac{((L-\plus M_n(\lambda_0))b_v,b_w)}{\|Rb_v\|\|R b_w\|}
\]
is essentially self-adjoint on $C(\widehat{V})$. A straight forward calculation shows that $L_{\min}$ is unitary equivalent via  $U:\ell^2(\widehat{V},m)\rightarrow\ell^2(\widehat{V})$, $(x_v)_{v\in \widehat{V}}\mapsto (\|Rb_v\|x_v)_{v\in \widehat{V}}$ to the operator $D_{L}$.
The assumption in (i) on the sequences $(v_n)_{n\in\N}$ in $V$ and the invariance $D_{L}C(\widehat{V})\subseteq C(\widehat{V})$  allows us to apply \cite[Theorem  6]{KL11} which yields the essential self-adjointness of $D_{L}$ on $C(\widehat{V})$. This shows the essential self-adjointness of $S_{L}^{\min}=S_{L_{\min}}$ by Proposition \ref{bekannt} (a).
The assumption (ii) implies by \cite[Theorem 11]{KL10} that $D_0$ given by $D_L$ with $c(v)=0$ for all $v\in\widehat{V}$ is bounded. Therefore $D_L$ on $C(\widehat{V})$ is the bounded and symmetric perturbation of the essentially self-adjoint multiplication operator $(x_v)_{v\in \widehat{V}}\mapsto(\frac{c(v)}{\|Rb_v\|^2}x_v)_{v\in \widehat{V}}$ on $C(\widehat{V})$ hence essentially self-adjoint because of the Kato-Rellich theorem \cite[Theorem V.4.4]{K80}.
The correspondence in (b) is a consequence of Theorem \ref{dieDS} (b).

The assertion (c) follows from \cite[Theorem 5.1]{GHKLW13} applied to $D_{L}$ which shows that all self-adjoint extensions of $D_{L}$  have resolvents in $\mathfrak{S}_1(\ell^2(\widehat V,m))$. Note that the assumptions of this Theorem 5.1 are satisfied because of $\sum_{v\in \widehat{V}}m(v)=\sum_{v\in \widehat{V}}\|Rb_v\|^2<\infty$ and (i) and (iii), see also \cite[Example 4.6]{GHKLW13}. The assumption (ii) that $\big(\plus S_{n0}-\lambda\big)^{-1}\in\mathfrak{S}_1(\Hi)$ for $\lambda\in\rho\big(\plus S_{n0}\big)$ together with Proposition \ref{bekannt}~(e) imply that $(\widehat{S}-\lambda)^{-1}\in\mathfrak{S}_1(\Hi)$. This proves (c).

Let $\widehat{S}_{L}$ be an extension of $S_{L}^{\min}$ and $\widehat{D}_{L}$ be an extension of $D_{L}$ on $C(\widehat{V})$ with  $\widehat{S}_{L}=S_{\widehat{D}_{L}}$.
It was shown in \cite[p.\ 206]{KL11} that $\widehat{D}_L$ has the same action as $D_L$. For $f\in \dom \widehat{D}_{L}$
with $(f,f)_m=1$ we see from $b(v,w)\geq 0$ that
\begin{align*}
(\widehat{D}_{L} f,f)_m&=\sum_{v\in\widehat  V}m(v)(\widehat{D}_Lf)_v\overline{f_v}\\&=\frac{1}{2}\sum_{v,w\in\widehat V}b(v,w)|f_v-f_w|^2+\sum_{v\in\widehat V}c(v)|f_v|^2\\
&\geq\sum_{v\in\widehat V}c(v)|f_v|^2
\geq \inf_{v\in\widehat V}\frac{c(v)}{\|Rb_v^2\|}(f,f)_m
=\inf_{v\in \widehat{V}}\frac{c(v)}{\|Rb_v\|^2}.
\end{align*}
Proposition \ref{negEWundso} (a) applied to the regularized boundary triplet  $\{\Gi,\widetilde{\Gamma}_0,\widetilde{\Gamma}_1\}$ from Theorem \ref{handyreg} yields that $S_{\widehat{D}_{L}}$ is semi-bounded from below. Here we used that due to \eqref{invariant} we have $S^*|_{\ker\widetilde{\Gamma}_0}=S_F$.

Assume now that $\inf_{v\in \widehat{V}}\frac{c(v)}{\|Rb_v\|^2} >-\infty$ holds. Then the operator $\widehat{D}_{L}$ is the bounded perturbation of an operator $\widehat{D}_{L}^+$ where we replace $c(v)$ with its positive part $c(v)^+:=\max\{c(v),0\}$. Therefore we can apply the previous arguments to $\widehat{D}_{L}^+$. By assumption, $\widehat{D}_{L}$ is a bounded perturbation of $\widehat{D}_{L}^+$ again the Kato-Rellich theorem shows that self-adjointness is preserved which proves (a) and (c). Furthermore, (d) follows from Proposition \ref{negEWundso} (b).
\end{proof}

\section{Gesztesy-\v{S}eba realizations of Dirac operators on metric graphs}
\label{sec:Dirac}
In this section, we define the Gestezy-\v{S}eba realization of Dirac operators on a locally finite graphs given by a set of vertices $V$ and a set of edges $E$. On each edge $e\in E$ with finte length $\ell(e)$ we consider the Dirac operator
\[
D_e:=\begin{pmatrix}c^2/2 & -ic\frac{d}{dx_e}\\ -ic\frac{d}{dx_e}&-c^2/2\end{pmatrix},\ \dom D_e:=H^{1}_0(0,\ell(e))\otimes\C^2,
\]
where $c$ denotes the speed of light. It was shown in \cite[Lemma 3.1]{CMP13} that a boundary triplet for $D_e^*$ is given by
\begin{align*}
\Gi_e:=\C^2,\quad \hat \Gamma_0^{(e)}\begin{pmatrix}\psi_{e,1}\\ \psi_{e,2}\end{pmatrix}:=\begin{pmatrix}\psi_{e,1}(0+)\\ ic\psi_{e,2}(\ell(e)-)\end{pmatrix},\quad \hat \Gamma_1^{(e)}\begin{pmatrix}\psi_{e,1}\\ \psi_{e,2}\end{pmatrix}:=\begin{pmatrix}ic\psi_{e,2}(0+)\\ \psi_{e,1}(\ell(e)-)\end{pmatrix}
\end{align*}
with the Weyl function for $\lambda\in\rho(D_{e}^*|_{\ker\hat \Gamma_0^{(e)}})$
\[
\hat M_e(\lambda):=\frac{1}{\cos(\ell(e)k(\lambda))}\begin{pmatrix}ck_1(\lambda)\sin(\ell(e)k(\lambda))& 1 \\ 1& (ck_1(\lambda))^{-1}\sin(\ell(e)k(\lambda))\end{pmatrix},
\]
where we abbreviate
\[
k(\lambda):=c^{-1}\sqrt{\lambda^2-(c^2/2)^2},\quad k_1(\lambda):=\frac{ck(\lambda)}{\lambda+c^2/2}=\sqrt{\frac{\lambda-c^2/2}{\lambda+c^2/2}}
\]
with $\sqrt{\cdot}$ such that $k(x)>0$ for $x>\frac{c^2}{2}$. Under the assumption that $\sup_{e\in E}\ell(e)<\infty$, it was shown in \cite[Equation (3.56)]{CMP13} that for some $\varepsilon>0$ we have $(\frac{c^2}{2}-\varepsilon,\frac{c^2}{2}+\varepsilon)\subseteq\bigcap_{e\in E}\rho(D_{e}^*|_{\ker\hat \Gamma_0^{(e)}})$ and
\begin{align}
\label{wertebeic2}
\hat M_e\left(\frac{c^2}{2}\right)=\begin{pmatrix}0 & 1 \\ 1 & \ell(e)\end{pmatrix},\quad \hat M_e'\left(\frac{c^2}{ 2}\right)=\begin{pmatrix}\ell(e) & \frac{\ell(e)^2}{2} \\ \frac{\ell(e)^2}{2} & \frac{\ell(e)}{c^2}+\frac{\ell(e)^3}{3}\end{pmatrix}.
\end{align}
%{\bf Lieber Hannes,  die Formel \eqref{wertebeic2} kann schon stimmen. Wenn man die Ableitungen
%von $k$ und $k_1$ berechnet, dann gehen die f\"{u}r $c^2/2$ gegen unendlich,
%wenn man die mit dem Tangens multipiziert, so geht der gegen Null und man muss l'Hospital benutzen.
%Dann wird es supereklig und ich bin geneigt, Deinem Computer zu glauben :)
%In \cite{CMP13}  hab ich es nichts dazu gefunden....}
%\ \\

To describe a point interaction on a graph, we consider the boundary triplet for $D_e^*$ given by a unitary transformation
\[
\begin{pmatrix}
\Gamma_0^{(e)}\\ \Gamma_1^{(e)}
\end{pmatrix}:=\begin{bmatrix}W_{00} & W_{01}\\ W_{10} & W_{11}\end{bmatrix}\begin{pmatrix}
\hat \Gamma_0^{(e)}\\ \hat \Gamma_1^{(e)}
\end{pmatrix}=\begin{pmatrix}
1 & 0 & 0 & 0\\ 0 & 0 & 0 & i\\ 0 & 0 & 1 & 0\\ 0 & -i & 0 & 0\\
\end{pmatrix} \begin{pmatrix} \psi_{e,1}(0+)\\[0.75ex] ic\psi_{e,2}(\ell(e)-) \\[0.75ex]
ic\psi_{e,2}(0+)\\[0.75ex] \psi_{e,1}(\ell(e)-)
\end{pmatrix}
\]
with $W_{00},W_{01},W_{10},W_{11}\in\C^{2\times 2}$ and therefore
\[
\Gamma_0^{(e)}\begin{pmatrix}
\psi_{e,1}\\ \psi_{e,2}
\end{pmatrix}=\begin{pmatrix} \psi_{e,1}(0+)\\ i\psi_{e,1}(\ell(e)-)\end{pmatrix},\quad \Gamma_1^{(e)}\begin{pmatrix}
\psi_{e,1}\\ \psi_{e,2}
\end{pmatrix}:=\begin{pmatrix}ic\psi_{e,2}(0+)\\ c\psi_{e,2}(\ell(e)-)
\end{pmatrix}.
\]
It was shown in \cite{DHMS00} that such a unitary transformation leads to a boundary triplet with the Weyl function given by
\begin{align}
\label{transform}
\begin{split}
M_e(\lambda)&=(W_{10}+W_{11}\hat M_e(\lambda))(W_{00}+W_{01}\hat M_e(\lambda))^{-1}\\[1ex]
&=\frac{ck_1(\lambda)}{\sin(\ell(e)k(\lambda))}\begin{pmatrix}\cos(\ell(e)k(\lambda))&-i\\i &-\cos(\ell(e)k(\lambda))\end{pmatrix}
\end{split}
\end{align}
for all $\lambda\in\rho(D_{e}^*|_{\ker\hat \Gamma_0^{(e)}})\cap \rho(D_{e}^*|_{\ker \Gamma_0^{(e)}})$.

Introduce the set $I_v$ with $(e,0)\in I_v$ if $e\in E$ and $e$ has $v$ as initial vertex and $(e,1)\in I_v$ if $e\in E$ and $e$ has $v$ as terminal vertex. The vectors $b_v\in\Gi$ are given by
\[
(b_v)_{(e,t)}:=\begin{cases}1, &\text{if $(e,0)\in I_v$,}\\ i, &\text{if $(e,1)\in I_v$,}\\ 0, & \text{if $(e,t)\notin I_v$.}\end{cases}
\]
Let $(\alpha(v))_{v\in V}$ be a real sequence.
The operator ${\rm GS}_{\alpha}$ is given by
\begin{align*}
\dom {\rm GS}_{\alpha}:=\left\{(\psi_1,\psi_2)^{\top}\in \Eplus D_e^*~:~\psi_1\in \mathcal{C}(G),\ ic\sum_{(e,t)\in I_v}\sgn(e,t)\psi_{e,2}(t\ell(e))=\alpha(v)\psi_1(v),\ v\in V\right\},
\end{align*}
where $\mathcal{C}(G)$ is the set of continuous functions on $G$ viewed as a metric space and $\psi_1(v)$ is the value of $\psi_1$ at the vertex $v$. We follow here \cite{CMP13}
and call this operator \emph{Gestesy-\v{S}eba realization}.

If $\sup_{e\in E}\ell(e)<\infty$, it can easily be seen from \eqref{transform} that for some $\varepsilon>0$ we have $(\frac{c^2}{2}-\varepsilon,\frac{c^2}{2}+\varepsilon)\subseteq\bigcap_{e\in E}\rho(D_{e}^*|{\ker\Gamma_0^{(e)}})$ and that
\begin{align*}
M_e\left(\frac{c^2}{2}\right)&=\frac{1}{\ell(e)}\begin{pmatrix}1 & -i\\ i & 1\end{pmatrix}.
\end{align*}
We also see from \eqref{transform} and \eqref{wertebeic2} with $T:=\left(W_{00}+W_{01}\hat M_e\left(\frac{c^2}{2}\right)\right)^{-1}$ that
\begin{align*}
M_e'\left(\frac{c^2}{2}\right)
&=W_{11}\hat M_e'\left(\frac{c^2}{2}\right)T-\left(W_{10}+W_{11}\hat
M_e\left(\frac{c^2}{2}\right)\right)TW_{01} \hat M'_e\left(\frac{c^2}{2}\right)T\\[1ex]
&=\begin{pmatrix} \frac{1}{\ell(e)c^2} + \frac{\ell(e)}{3}& -\frac{ i\ell(e)}{2} + \frac{i}{\ell(e)c^2} + \frac{i\ell(e)}{3}\\
\frac{i\ell(e)}{2} - \frac{i}{\ell(e)c^2} - \frac{i\ell(e)}{3} & \frac{1}{\ell(e)c^2} + \frac{\ell(e)}{3}\end{pmatrix}
\end{align*}
and this implies
\begin{align}
\label{lowerbound}
\left\|M_e'\left(\frac{c^2}{2}\right)\right\|\geq (1,0)M_e'\left(\frac{c^2}{ 2}\right)\begin{pmatrix}1\\ 0\end{pmatrix}=\frac{1}{\ell(e)c^2} +\frac{\ell(e)}{3}\geq\frac{1}{\ell(e)c^2}.
\end{align}
Furthermore, we define
$$
\Gi_v:=\Span\{1_v\},\quad 1_v:=((b_{v})_{(e,t)})_{(e,t)\in I_v}\quad  \mbox{and} \quad L_v1_v:=\frac{\alpha(v)}{\deg v}1_v.
$$
We have according to \eqref{dislap} for $v\neq w$
\begin{align*}
b(v,w)&:=\left(\Big(\Eplus M_e\left(\frac{c^2}{2}\right)-L\Big)b_v,b_w\right)=
\left(\Eplus M_e\left(\frac{c^2}{2}\right)b_v,b_w\right)\\ &=\begin{cases}\ell(e)^{-1} & \text{if $e=vw\in E$,}\\[1ex]
0 & \text{if $e=vw\notin E$,}\end{cases}\\
\end{align*}
and we see for $v\in V$
\begin{align*}
c(v)&:=\left(\Big(L-\Eplus M_e\left(\frac{c^2}{2}\right)\Big)b_v,b_v\right)-\sum_{w\in V, w\neq v}\left(\Eplus M_e\left(\frac{c^2}{2}\right)b_v,b_w\right)\\[1ex]
 &=(L_vb_v,b_v)=\alpha(v).
\end{align*}

As an application of Theorem \ref{genresultvorne}, we have the following result on the self-adjointness of the Gesztesy-\v{S}eba realizations.
\begin{Proposition}
Consider a locally finite graph with set of vertices $V$ and set of edges $E$ and let $\{\alpha(v)\}_{v\in V}$ be a real-valued sequence. Then the operator ${\rm GS}_\alpha$ is a locally finite extension of $\Eplus D_e$ and if $\sup_{e\in E}\ell(e)<\infty$ then ${\rm GS}_{\alpha}$ is self-adjoint.
\end{Proposition}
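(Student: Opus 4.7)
The plan is to split the proof in two: first, recognize ${\rm GS}_\alpha$ as the locally finite extension $S_L^{\loc}$ built from the data $(\Gi_v,L_v)_{v\in V}$ already specified in the excerpt, and then, under $\sup_{e\in E}\ell(e)<\infty$, invoke Theorem \ref{genresultvorne}~(a)(ii) with $\lambda_0=c^2/2$.

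For the first step, I would unpack the boundary maps. Since $\Gamma_0^{(e)}(\psi_{e,1},\psi_{e,2})^\top=(\psi_{e,1}(0+),i\psi_{e,1}(\ell(e)-))^\top$, the entries of $\Gamma_0^v f$ are $\psi_{e,1}(0+)$ on slots $(e,0)\in I_v$ and $i\psi_{e,1}(\ell(e)-)$ on slots $(e,1)\in I_v$, matching exactly the pattern of the vector $1_v$. Hence the requirement $\Gamma_0^v f\in\Gi_v=\Span\{1_v\}$ is precisely the continuity of $\psi_1$ at $v$, i.e.\ $\psi_1\in\mathcal{C}(G)$, with common value $\psi_1(v)$. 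Using $\|1_v\|^2=\deg v$, a short inner-product computation with $\Gamma_1^{(e)}(\psi_{e,1},\psi_{e,2})^\top=(ic\psi_{e,2}(0+),c\psi_{e,2}(\ell(e)-))^\top$ yields
\[
P_{\Gi_v}\Gamma_1^v f=\frac{ic}{\deg v}\sum_{(e,t)\in I_v}\sgn(e,t)\psi_{e,2}(t\ell(e))\,1_v,
\]
while $L_v\Gamma_0^v f=\frac{\alpha(v)}{\deg v}\psi_1(v)\,1_v$, so the identity $L_v\Gamma_0^v f=P_{\Gi_v}\Gamma_1^v f$ reproduces the Gesztesy--\v{S}eba vertex condition. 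This establishes ${\rm GS}_\alpha=S_L^{\loc}$.

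For the second step, $\sup_{e\in E}\ell(e)<\infty$ is exactly what was shown in the excerpt to imply $(\frac{c^2}{2}-\varepsilon,\frac{c^2}{2}+\varepsilon)\subseteq\bigcap_{e\in E}\rho(D_e^*|_{\ker\Gamma_0^{(e)}})$, so Theorem \ref{handyreg} applies at $\lambda_0=c^2/2$ and Theorem \ref{genresultvorne} is available. Since each $\Gi_v$ is one-dimensional, we may take $\widehat V=V$ with $b_v=1_v$, and the formulas preceding the proposition give $b(v,w)=\ell(e)^{-1}\geq 0$ for $e=vw\in E$ and $b(v,w)=0$ otherwise, which verifies the nonnegativity hypothesis. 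To check boundedness of the weighted degree, I would insert the lower bound \eqref{lowerbound}, namely $\|M_e'(c^2/2)\|\geq(\ell(e)c^2)^{-1}$, to obtain
\[
\|Rb_v\|^2=\sum_{(e,t)\in I_v}\|M_e'(c^2/2)\|\geq\frac{1}{c^2}\sum_{(e,t)\in I_v}\ell(e)^{-1}=\frac{1}{c^2}\sum_{w\in V,\,w\neq v}b(v,w),
\]
so $\mathrm{Deg}(v)\leq c^2$ uniformly in $v\in V$. Theorem \ref{genresultvorne}~(a)(ii) then yields self-adjointness of ${\rm GS}_\alpha$.

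The only mildly delicate point is the identification step, where one has to track the factors of $i$ coming from the unitary twist $W$ of the boundary triplet and from $1_v$, so that the continuity condition and the constant in front of $\psi_1(v)$ come out correctly. Once this is in place, self-adjointness reduces to plugging the uniform lower bound \eqref{lowerbound} (already isolated in the preceding calculation) into the definition of $\mathrm{Deg}$; no further analytic input is required.
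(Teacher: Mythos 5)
Your proposal is correct and follows essentially the same route as the paper: the same unpacking of the twisted boundary maps to identify ${\rm GS}_\alpha$ with $S_L^{\loc}$, and the same use of the lower bound \eqref{lowerbound} to get ${\rm Deg}(v)\leq c^2$ so that Theorem \ref{genresultvorne}~(a)(ii) applies. No gaps.
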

\begin{proof}
We show that ${\rm GS}_{\alpha}$ is a locally finite extension of $\Eplus D_e$.
Let $I:=E\times \{0,1\}$, then
\begin{align*}
\Gamma_0\psi&=(\Gamma_0^{(e,t)}(\psi_{e,1},\psi_{e,2})_{e\in E})_{(e,t)\in I}=(i^t\psi_{e,1}(t\ell(e)))_{(e,t)\in I},\\
\Gamma_1\psi&=(\Gamma_1^{(e,t)}(\psi_{e,1},\psi_{e,2})_{e\in E})_{(e,t)\in I}=(ci^{1-t}\psi_{e,1}(t\ell(e)))_{(e,t)\in I}
\end{align*}
and therefore
\[
\Gamma_0^v(\psi_{e,1},\psi_{e,2})_{e\in E}:=(i^t\psi_{e,1}(t\ell(e)))_{(e,t)\in I_v},\quad \Gamma_1^v(\psi_{e,1},\psi_{e,2})_{e\in E}:=(ci^{1-t}\psi_{e,2}(t\ell(e)))_{(e,t)\in I_v}.\]
Since $\Gi_v=\Span\{1_v\}$ for all $v\in V$, we see that $\psi_1\in\mathcal{C}(G)$ is equivalent to the condition $
\Gamma_0^v(\psi_{e,1},\psi_{e,2})_{e\in E}\in\Gi_v$ for all $v\in V$. Moreover, it is easy to see that the sum condition in the definition of $\dom{\rm GS}_{\alpha}$ is equivalent to
\begin{align*}
    P_{\Gi_v}\Gamma_1^v(\psi_{e,1},\psi_{e,2})_{e\in E}&=\frac{1}{\|1_v\|^2}(\Gamma_1^v(\psi_{e,1},\psi_{e,2})_{e\in E},1_v)1_v\\
    &=\frac{1}{\deg v}\sum_{(e,t)\in I_v}\overline{(b_v)_{(e,t)}}ci^{1-t}\psi_{e,2}(t\ell(e))1_v\\
    &=\frac{ic}{\deg v}\sum_{(e,t)\in I_v}\sgn(e,t)\psi_{e,2}(t\ell(e))1_v\\&=\frac{\alpha(v)}{\deg v}\psi_1(v)1_v=L_v\Gamma_0^v(\psi_{e,1},\psi_{e,2})_{e\in E}.
\end{align*}
Thus, we have seen that ${\rm GS}_{\alpha}$ is a locally finite extension of $\Eplus D_e$.

For $\sup_{e\in E}\ell(e)<\infty$, the assumptions of Theorem \ref{dieDS} are fulfilled.
To see that ${\rm GS}_{\alpha}$ is self-adjoint, we apply Theorem \ref{genresultvorne} (a). The estimate \eqref{lowerbound} implies  that the weighted degree \eqref{defDeg} satisfies
\begin{align*}
{\rm Deg}(v)=\frac{\sum_{w\in V}b(v,w)}{\|Rb_v\|^2}=\frac{\sum_{w\in V}b(v,w)}{\sum_{e=vw}\left\|M_e'(\frac{c^2}{2})\right\|}&\leq \frac{\sum_{e=vw}\ell(e)^{-1}}{\sum_{e=vw}\frac{1}{c^2\ell(e)}}=c^2<\infty
\end{align*}
for all $v\in V$, where the summation $\sum_{e=vw}$ is taken over all edges $e$ that contain $v$ as a vertex. Hence, according to Theorem \ref{genresultvorne}, $GS_{\alpha}$ is self-adjoint.
\end{proof}

\end{document}